\documentclass[11pt]{article}
\usepackage{amsmath,amssymb,amsthm}
\usepackage{amscd}
\setlength{\topmargin}{-0.5cm}
\setlength{\textheight}{22cm}
\setlength{\evensidemargin}{0.5cm}
\setlength{\oddsidemargin}{0.5cm}
\setlength{\textwidth}{15cm}

\newtheorem{theorem}{Theorem}[section]
\newtheorem{lemma}[theorem]{Lemma}
\newtheorem{proposition}[theorem]{Proposition}
\newtheorem{corollary}[theorem]{Corollary}

\theoremstyle{plain}

\theoremstyle{definition}
\newtheorem{definition}[theorem]{Definition}
\newtheorem{remark}[theorem]{Remark}

\numberwithin{equation}{section}

\renewcommand{\labelenumi}{\textup{(\theenumi)}}

\newcommand{\Homeo}{\operatorname{Homeo}}

\newcommand{\id}{\operatorname{id}}
\newcommand{\Ker}{\operatorname{Ker}}

\newcommand{\Ad}{\operatorname{Ad}}

\def\Re{{\operatorname{Re}}}
\def\det{{{\operatorname{det}}}}

\newcommand{\N}{\mathbb{N}}
\newcommand{\C}{\mathbb{C}}
\newcommand{\R}{\mathbb{R}}

\newcommand{\Z}{\mathbb{Z}}
\newcommand{\Zp}{{\mathbb{Z}}_+}

\title{On flow equivalence of one-sided topological Markov shifts}
\author{Kengo Matsumoto \\
Department of Mathematics \\
Joetsu University of Education \\
Joetsu, 943-8512, Japan
}
\date{}

\begin{document}
\maketitle

\def\det{{{\operatorname{det}}}}

\begin{abstract}
We introduce  notions of suspension and  flow equivalence on one-sided topological Markov shifts, which we call one-sided suspension and one-sided flow equivalence, respectively.
We prove that 
one-sided flow equivalence 
is equivalent to continuous orbit equivalence
on one-sided topological Markov shifts.
We also show that the zeta function of the flow 
on a one-sided suspension 
is a dynamical zeta function with some potential function
and that the set of certain dynamical zeta functions 
is invariant under one-sided flow equivalence of topological Markov shifts.
 \end{abstract}

2010{\it Mathematics Subject Classification}:
 Primary 37B10; Secondary  37C30.



\def\OA{{{\mathcal{O}}_A}}
\def\OB{{{\mathcal{O}}_B}}
\def\OTA{{{\mathcal{O}}_{\tilde{A}}}}
\def\SOA{{{\mathcal{O}}_A}\otimes{\mathcal{K}}}
\def\SOB{{{\mathcal{O}}_B}\otimes{\mathcal{K}}}
\def\SOTA{{{\mathcal{O}}_{\tilde{A}}\otimes{\mathcal{K}}}}
\def\FA{{{\mathcal{F}}_A}}
\def\FB{{{\mathcal{F}}_B}}
\def\DA{{{\mathcal{D}}_A}}
\def\DB{{{\mathcal{D}}_B}}
\def\DZ{{{\mathcal{D}}_Z}}
\def\DTA{{{\mathcal{D}}_{\tilde{A}}}}
\def\Ext{{{\operatorname{Ext}}}}
\def\Max{{{\operatorname{Max}}}}
\def\Per{{{\operatorname{Per}}}}
\def\PerB{{{\operatorname{PerB}}}}
\def\Homeo{{{\operatorname{Homeo}}}}
\def\HA{{{\frak H}_A}}
\def\HB{{{\frak H}_B}}
\def\HSA{{H_{\sigma_A}(X_A)}}
\def\Out{{{\operatorname{Out}}}}
\def\Aut{{{\operatorname{Aut}}}}
\def\Ad{{{\operatorname{Ad}}}}
\def\Inn{{{\operatorname{Inn}}}}
\def\det{{{\operatorname{det}}}}
\def\exp{{{\operatorname{exp}}}}
\def\cobdy{{{\operatorname{cobdy}}}}
\def\Ker{{{\operatorname{Ker}}}}
\def\ind{{{\operatorname{ind}}}}
\def\id{{{\operatorname{id}}}}
\def\supp{{{\operatorname{supp}}}}
\def\co{{{\operatorname{co}}}}
\def\Sco{{{\operatorname{Sco}}}}
\def\COE{{{\operatorname{COE}}}}


\section{Introduction}

Flow equivalence relation on two-sided topological Markov shifts is 
one of most imporatnt and interesting equivalence relations on symbolic 
dynamical systems.
It has a close relationship to
 classifications of not only continuous time dynamical systems but also 
 associated $C^*$-algebras.
For an irreducible square matrix $A = [A(i,j)]_{i,j=1}^N$ with its entries in $\{0,1\}$, the two-sided topological Markov shifts 
$(\bar{X}_A,\bar{\sigma}_A)$
is defined as a compact Hausdorff space   
$\bar{X}_A$ consistes of bi-infinite sequences 
$(\bar{x}_n)_{n\in \Z}$ 
of $\bar{x}_n \in \{1,2,\dots,N\}$
such that 
$A(\bar{x}_n,\bar{x}_{n+1}) =1, n\in \Z$ with shift homeomorphism
$\bar{\sigma}_A$ defined by
$
\bar{\sigma}_A((\bar{x}_n)_{n \in \Z}) 
=(\bar{x}_{n+1})_{n \in \Z}$.
For a positive continuous function 
$g$ on $\bar{X}_A$,
let us denote by 
$\bar{S}_A^g$
the compact Hausdorff space obtained from
\begin{equation*}
\{ (\bar{x}, r) \in  \bar{X}_A\times\R \mid \bar{x}\in\bar{X}_A,
0 \le r \le g(\bar{x}) \} 
\end{equation*}
by identifying
$(\bar{x}, g(\bar{x}))$
with
$(\bar{\sigma}_A(\bar{x}),0)$
for each $\bar{x} \in \bar{X}_A$.
Let
$
\bar{\phi}_{A,t}, t \in \R
$
be the flow on
$\bar{S}_A^g$
defined by 
$\bar{\phi}_{A,t}([(\bar{x},r)]) 
=[(\bar{x},r+t)]
$
for
$
[(\bar{x},r)] \in \bar{S}_A^g.
$
The dynamical system
$(\bar{S}_A^g, \bar{\phi}_A)$
is called the suspension of 
$(\bar{X}_A, \bar{\sigma}_A)$
by ceiling function $g$.
Two-sided topological Markov shifts
$(\bar{X}_A, \bar{\sigma}_A)$
and
$(\bar{X}_B, \bar{\sigma}_B)$
are said to be flow equivalent if 
there exists a positive continuous function $g$ on $\bar{X}_A$
such that 
$(\bar{S}_A^g, \bar{\phi}_A)$
is topologically conjugate to
$(\bar{S}_B^{1}, \bar{\phi}_B)$.
It is well-known that 
$(\bar{X}_A,\bar{\sigma}_A)$ 
and 
$(\bar{X}_B,\bar{\sigma}_B)$
are flow equivalent if and only if 
$\Z^N/(\id - A){\Z}^N$ 
is isomorphic to
$\Z^M/(\id - B){\Z}^M$
as abelian groups
and
$\det(\id - A) =\det(\id - B)$,
where $N, M$ are the sizes of the matrices $A, B$,
respectively
(\cite{BF}, \cite{Franks}, \cite{PS}).
Let us denote by
${\mathcal{K}}$ the $C^*$-algebra of compact operators on 
the separable infinite dimensional Hilbert space
$\ell^2(\N)$ and ${\mathcal{C}}$
its maximal abelian  $C^*$-subalgebra consisting of diagonal elements on
$\ell^2(\N)$.
Let us denote by $\OA$ the Cuntz--Krieger algebra and
by $\DA$ its canonical maximal abelian  $C^*$-subalgebra.
Since the group $\Z^N/(\id - A){\Z}^N$ 
is a complete invariant of the isomorphism class of 
the tensor product $C^*$-algebra
$\SOA$ (\cite{Ro}),
the $C^*$-algebra $\SOA$ with
$\det(\id - A)$
is a complete invariant for flow equivalence of 
the two-sided topological Markov shift
$(\bar{X}_A, \bar{\sigma}_A)$.
It has been recently shown in \cite{MMKyoto} that 
the isomorphism class of the pair
$(\OA\otimes{\mathcal{K}}, \DA\otimes{\mathcal{C}})$
is a complete invariant for flow equivalence class of 
$(\bar{X}_A, \bar{\sigma}_A)$.

One-sided topological Markov shifts $(X_A, \sigma_A)$
are also important and interesting class of dynamical systems.
The space $X_A$ 
 is defined as a compact Hausdorff space   
consisting of right infinite sequences 
$(x_n)_{n\in \N}$ 
of $x_n \in \{1,2,\dots,N\}$
such that 
$A(x_n,x_{n+1}) =1, n\in \N$ with continuous map
$\sigma_A$ defined by
$
\sigma_A((x_n)_{n \in \N}) 
=(x_{n+1})_{n \in \N}$.
In \cite{MaPacific},
the author has introduced a notion of continuous orbit equivalence
between one-sided topological Markov shifts.
 One-sided topological Markov shifts $(X_A, \sigma_A)$
 and
$(X_B,\sigma_B)$ are said to be continuously orbit equivalent
if there exists a homeomorphism 
$h: X_A \rightarrow X_B$ and continuous functions
$k_1,l_1: X_A \rightarrow \Zp$
and
$k_2,l_2: X_B \rightarrow \Zp$
such that 
\begin{align}
\sigma_B^{k_1(x)} (h(\sigma_A(x))) 
& = \sigma_B^{l_1(x)}(h(x))
\quad \text{ for } \quad 
x \in X_A,  \label{eq:orbiteqx} \\
\sigma_A^{k_2(y)} (h^{-1}(\sigma_B(y))) 
& = \sigma_A^{l_2(y)}(h^{-1}(y))
\quad \text{ for } \quad 
y \in X_B. \label{eq:orbiteqy}
\end{align}
In \cite{MMKyoto}, it has been proved 
that the isomorphism class of $\OA$ with
$\det(\id - A)$ is a complete invariant 
for continuous orbit equivalence  of 
the one-sided topological Markov shift
$(X_A, \sigma_A)$.
We have already known in  \cite{MaPacific}
 that 
the isomorphism class of the pair
$(\OA, \DA)$
is a complete invariant for continuous orbit equivalence  of 
$(X_A, \sigma_A)$.
Hence we may regard  
continuous orbit equivalence  of 
 one-sided topological Markov shifts
 as a one-sided counterpart of 
 flow equivalence of 
two-sided topological Markov shifts.

In this paper,
we will introduce a notion of flow equivalence on one-sided topological Markov shifts
$(X_A,\sigma_A)$.
We will first introduce a one-sided suspension $S_{A,b}^{l,k}$ 
with a flow $\phi_A$
associated to three real valued continuous functions 
$l,k,b\in C(X_A,\R)$
on $X_A$ for a one-sided topological Markov shift 
$(X_A,\sigma_A)$.
The space $S_{A,b}^{l,k}$ is determined by 
a base map $b: X_A \rightarrow \R$
and a ceiling function $l:X_A\rightarrow \R_+$
by identifying
$(x,r)$ with $(\sigma_A(x), r-(l-k))$ for
$r \ge l(x)$.
By using the one-sided suspension, 
we will define one-sided flow equivalence on
one-sided topological Markov shifts in Definition \ref{defn:onesidedfe}. 
As a main result of the paper, we will prove the following theorem.
\begin{theorem}[{Theorem \ref{thm:fecoe}}] \label{thm:thm1.1}
One-sided topological Markov shifts 
$(X_A, \sigma_A)$ and $(X_B,\sigma_B)$ 
are one-sided flow equivalent
if and only if they 
are continuously orbit equivalent.
\end{theorem}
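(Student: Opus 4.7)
The plan is to translate between the cocycle data $(k_1,l_1,k_2,l_2)$ of continuous orbit equivalence and the suspension data $(l,k,b)$ of one-sided flow equivalence, via the identification of $X_A$ with a canonical cross-section $\{[(x,0)]:x\in X_A\}$ of $S_{A,b_A}^{l_A,k_A}$, under which $\sigma_A$ becomes the first-return map of the flow $\phi_A$. With this point of view the defining identification $(x,r)\sim(\sigma_A(x),r-l_A(x)+k_A(x))$ built into the one-sided suspension makes $l_A-k_A$ play the role of a return-time cocycle, which is exactly the shape of the cocycle $l_1-k_1$ appearing in \eqref{eq:orbiteqx}.

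\noindent\textbf{From continuous orbit equivalence to one-sided flow equivalence.} Starting from $(h,k_1,l_1,k_2,l_2)$, I would set $(l_A,k_A):=(l_1,k_1)$ on the $A$-side, construct the analogous $(l_B,k_B)$ on the $B$-side from $(k_2,l_2)$, choose the base functions $b_A,b_B$ compatibly, and define a candidate conjugacy by
\[
\Phi([(x,r)]):=[(h(x),r)].
\]
The map is clearly continuous and intertwines the $\R$-flows, so the genuine content is well-definedness on the quotient. The identification $[(x,l_1(x))]=[(\sigma_A(x),k_1(x))]$ on the $A$-side must land on a true identification on the $B$-side, and this reduces, via the flow $\phi_B$, exactly to $\sigma_B^{k_1(x)}(h(\sigma_A(x)))=\sigma_B^{l_1(x)}(h(x))$, i.e.\ to \eqref{eq:orbiteqx}. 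The inverse of $\Phi$ is built symmetrically from $h^{-1}$ using \eqref{eq:orbiteqy}.

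\noindent\textbf{From one-sided flow equivalence to continuous orbit equivalence.} Given a flow-conjugacy $\Phi:S_{A,b_A}^{l_A,k_A}\to S_{B,b_B}^{l_B,k_B}$, I would restrict $\Phi$ to the $A$-cross-section $x\mapsto[(x,0)]$, follow the $\phi_B$-orbit of the image until it first meets the $B$-cross-section, and define $h(x)\in X_B$ to be the resulting point. Counting the number of $B$-side identifications $[(y,l_B(y))]=[(\sigma_B(y),k_B(y))]$ crossed when comparing the orbits of $\Phi([(x,0)])$ and $\Phi([(\sigma_A(x),0)])$ yields the integer cocycles $k_1(x),l_1(x)\in\Zp$, and \eqref{eq:orbiteqx} is then a direct translation through $\Phi$ of the $A$-side identification $[(x,l_A(x))]=[(\sigma_A(x),k_A(x))]$. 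Applying the same construction to $\Phi^{-1}$ produces $k_2,l_2$ satisfying \eqref{eq:orbiteqy}.

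\noindent\textbf{Main obstacle.} The principal technical difficulty is \emph{well-definedness and continuity across the suspension identifications}. Because $\sigma_A$ is non-injective, distinct preimages give distinct representatives of a single class in $S_{A,b_A}^{l_A,k_A}$, and a candidate $\Phi$ must be shown to collapse them consistently; this is exactly where the freedom of the base function $b$---a feature absent in the classical two-sided suspension---plays its essential role, as it lets one tune the fibrewise starting heights so that the identifications on both sides can be matched globally. Verifying that the cocycles extracted from a flow conjugacy are actually $\Zp$-valued and continuous (rather than merely measurable) will likewise require a careful compactness and locally-constant argument based on continuity of $\phi_B$, and I expect this to be the most delicate step.
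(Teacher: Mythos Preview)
Your proposal rests on a misreading of Definition~\ref{defn:onesidedfe}. One-sided flow equivalence in this paper is \emph{not} the existence of a single flow-conjugacy $\Phi:S_{A,b_A}^{l_A,k_A}\to S_{B,b_B}^{l_B,k_B}$ between two general one-sided suspensions. Rather, the data consist of a homeomorphism $h:X_A\to X_B$ given \emph{a priori}, together with two separate continuous (not necessarily invertible) flow-maps $\Phi_1:S_{A,b_1}^{l_1,k_1}\to S_B^{1}$ and $\Phi_2:S_{B,b_2}^{l_2,k_2}\to S_A^{1}$ whose targets are the \emph{standard} suspensions, subject to $\Phi_1\circ b_{1,A}=s_B\circ h$ and $\Phi_2\circ b_{2,B}=s_A\circ h^{-1}$.

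For the direction COE $\Rightarrow$ flow equivalence your idea is essentially the paper's: with $b_1\equiv 0$ one sets $\Phi_1([x,r])=[h(x),r]\in S_B^{1}$, and well-definedness across the relation $(x,r)\underset{l_1,k_1}{\sim}(\sigma_A(x),r-c_1(x))$ reduces precisely to \eqref{eq:orbiteqx}. The only additional point is that $c_1=l_1-k_1=\Psi_h(1)$ must be an order unit of $(H^A,H^A_+)$, which follows from \cite{MMETDS}; there is no need to construct an inverse of $\Phi_1$, since $\Phi_2$ is built independently from $h^{-1}$ and $(k_2,l_2)$.

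For the converse your plan does not match the hypothesis and introduces difficulties that are absent. You do not need to manufacture $h$ by a first-return construction, nor to ``count crossings'' to extract integer cocycles: $h$ is already part of the data, and the target of $\Phi_1$ is $S_B^{1}$. The paper simply pushes the built-in relation $[x,l_1(x)]=[\sigma_A(x),k_1(x)]$ through $\Phi_1$ and the intertwining with the flows to obtain
\[
[h(x),\,l_1(x)-b_1(x)] \;=\; [h(\sigma_A(x)),\,k_1(x)-b_1(\sigma_A(x))] \quad\text{in } S_B^{1},
\]
and then invokes the injectivity of the standard base map $s_B:X_B\to S_B^{1}$ (Lemma~2.5) to read off
\[
\sigma_B^{\,l_1'(x)}(h(x))=\sigma_B^{\,k_1'(x)}(h(\sigma_A(x))),\qquad l_1'=l_1-b_1,\ \ k_1'=k_1-b_1\circ\sigma_A,
\]
which are already $\Zp$-valued and continuous by the suspension-triplet axioms. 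Thus the ``main obstacle'' you flag---continuity and integrality of extracted cocycles, and the role of $b$ in matching identifications---never arises; the base function is absorbed algebraically into $l_1',k_1'$ rather than used to align fibres.
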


By using \cite[Theorem 3.6]{MMETDS},
we see the following characterization of one-sided flow equivalence
which is a corollary of the above theorem.
\begin{corollary}[{Corollary \ref{cor:flmatrix}}]  
One-sided topological Markov shifts
$(X_A, \sigma_A)$ and $(X_B,\sigma_B)$ 
are one-sided flow equivalent
if and only if
there exists an isomorphism
$\varPhi: \Z^N/(\id - A){\Z}^N 
\rightarrow 
\Z^M/(\id - B){\Z}^M$
of abelian groups
such that 
$\varPhi([u_A]) = [u_B]$
and
$\det(\id - A) =\det(\id - B)$,
where
$[u_A]$ (resp. $[u_B]$)  
is the class of the vector 
$u_A =[1,\dots,1]$ in $\Z^N/(\id - A){\Z}^N$
(resp. 
$u_B =[1,\dots,1]$ in $\Z^M/(\id - B){\Z}^M$).
\end{corollary}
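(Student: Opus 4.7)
The plan is to derive this corollary as an immediate consequence of Theorem~\ref{thm:thm1.1} chained with the invariant characterization recorded in \cite[Theorem~3.6]{MMETDS}. Since both the corollary and the cited result are biconditionals, the argument is simply a transitive composition of two equivalences, with no new dynamical content to be verified.

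First, I would invoke Theorem~\ref{thm:thm1.1} to replace the left-hand side of the stated biconditional: one-sided flow equivalence of $(X_A,\sigma_A)$ and $(X_B,\sigma_B)$ is the same thing as continuous orbit equivalence of these two one-sided topological Markov shifts. This reduces the problem to the known matrix classification of continuous orbit equivalence. Next, I would apply \cite[Theorem~3.6]{MMETDS}, which asserts that continuous orbit equivalence of $(X_A,\sigma_A)$ and $(X_B,\sigma_B)$ is equivalent to the existence of an abelian group isomorphism $\varPhi : \Z^N/(\id - A)\Z^N \to \Z^M/(\id - B)\Z^M$ with $\varPhi([u_A]) = [u_B]$ together with the determinant matching $\det(\id - A) = \det(\id - B)$. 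Concatenating these two equivalences yields the corollary.

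The only judgment calls in the writeup are notational: one must check that the distinguished vectors $u_A = [1,\dots,1]$ and $u_B = [1,\dots,1]$ used in the present paper agree with those used in \cite{MMETDS}, and that the sign conventions on the determinants coincide. Both checks are routine, so I do not foresee any substantive obstacle; the proof will be essentially a one-line citation of the two results.
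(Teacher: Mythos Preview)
Your proposal is correct and matches the paper's own treatment exactly: the paper presents Corollary~\ref{cor:flmatrix} as an immediate consequence of Theorem~\ref{thm:fecoe} combined with \cite[Theorem~3.6]{MMETDS}, with no additional argument supplied. The notational and sign-convention checks you flag are indeed routine and are not elaborated in the paper either.
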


The zeta function $\zeta_{\phi}(s)$
of a flow $\phi_t:S \rightarrow S$ 
on a compact metric space $S$ with at most countably many closed orbits 
is defined by
\begin{equation}
\zeta_{\phi}(s) 
= \prod_{\tau \in P_{orb}(S,\phi)}
(1 - e^{-s \ell(\tau)})^{-1} 
\qquad
(\text{see } \cite{PP}, \cite{Ruelle1978}, \cite{Ruelle2002}, \text{ etc}.) 
\label{eqn:zetaflow} 
\end{equation}
where
$P_{orb}(S,\phi)$ denotes the set of primitive periodic orbits of
the flow $\phi_t:S\rightarrow S$
and 
$\ell(\tau)$ is the primitive length of the closed orbit 
defined by
 $\ell(\tau) = \min\{t\in \R_+ \mid \phi_t(u) = u\}$ 
for any point $u \in \tau$.
For a one-sided suspension $S_{A,b}^{l,k}$ 
of $(X_A,\sigma_A)$,
there exists a bijective correspondence between primitive periodic orbits
$\tau \in P_{orb}(S_{A,b}^{l,k},\phi_A)$ and periodic orbits 
$\gamma_\tau \in P_{orb}(X_A)$ of $(X_A,\sigma_A)$ 
such that
the length  
$
\ell(\tau)
$
of the orbit
$\tau$ is   
$\sum_{i=0}^{p-1}c(\sigma_A^i(x))
$
for $c = l-k$ and 
$\gamma_\tau =\{x,\sigma_A(x), \dots,\sigma_A^{p-1}(x)\}.
$
Thefore we have 
\begin{proposition}[{Proposition \ref{prop:zetaformula}}]
The zeta function 
$\zeta_{\phi_A}(s)$
of the flow $\phi_A$
of the one-sided suspension 
$S_{A,b}^{l,k}$
of $(X_A,\sigma_A)$
is given by the dynamical zeta function 
$\zeta_{A,c}(s)$ with potential function 
$c=l-k$ such that 
\begin{equation}
\zeta_{A,c}(s)
= \exp\{
\sum_{n=1}^\infty\frac{1}{n}\sum_{{x}\in \Per_n({X}_A)}
\exp( -s \sum_{i=0}^{n-1}{c}({\sigma}_A^i({x}))) \}. 
\label{eq:dynamiczeta}
\end{equation}
\end{proposition}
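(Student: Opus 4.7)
The plan is to start from the product definition \eqref{eqn:zetaflow} of $\zeta_{\phi_A}(s)$ and pass through the exponential-logarithmic identity, then to use the bijective correspondence between flow orbits and shift orbits (already described in the text preceding the proposition) to rewrite the sum over primitive orbits as a sum over periodic points with the usual $\frac{1}{n}$ weight. No new dynamics are needed beyond the identification $\ell(\tau)=\sum_{i=0}^{p-1}c(\sigma_A^i(x))$ with $c=l-k$; the proof is essentially a combinatorial rearrangement.

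Concretely, I would first take logarithms and expand to get
\begin{equation*}
\log \zeta_{\phi_A}(s)
= \sum_{\tau \in P_{orb}(S_{A,b}^{l,k},\phi_A)}
-\log(1 - e^{-s \ell(\tau)})
= \sum_{\tau}\sum_{m=1}^\infty \frac{1}{m} e^{-m s\ell(\tau)}.
\end{equation*}
Using the correspondence $\tau \leftrightarrow \gamma_\tau$ with primitive period $p=p(\gamma_\tau)$ and $\ell(\tau)=S_p c(x):=\sum_{i=0}^{p-1} c(\sigma_A^i(x))$ for any $x\in\gamma_\tau$, and noting that $m S_p c(x) = S_{mp} c(x)$ because $\sigma_A^p(x)=x$, this becomes
\begin{equation*}
\log \zeta_{\phi_A}(s) = \sum_{\gamma \in P_{orb}(X_A)} \sum_{m=1}^\infty \frac{1}{m}\, e^{-s S_{mp(\gamma)} c(x_\gamma)}.
\end{equation*}

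The key bookkeeping step is then to set $n = m p(\gamma)$ and redistribute the weight $\tfrac{1}{m} = \tfrac{p(\gamma)}{n}$ across the $p(\gamma)$ points of $\gamma$. Since $S_n c$ is constant on $\gamma$ whenever $p(\gamma)\mid n$, one has $p(\gamma)\,e^{-sS_n c(x_\gamma)} = \sum_{y\in\gamma} e^{-s S_n c(y)}$, so the displayed sum equals
\begin{equation*}
\sum_{\gamma}\sum_{m=1}^\infty \frac{1}{m p(\gamma)} \sum_{y\in\gamma} e^{-s S_{mp(\gamma)} c(y)}
= \sum_{n=1}^\infty \frac{1}{n} \sum_{\gamma : p(\gamma)\mid n} \sum_{y\in\gamma} e^{-s S_n c(y)}.
\end{equation*}
Finally, the disjoint union $\bigsqcup_{\gamma: p(\gamma)\mid n}\gamma$ is exactly $\Per_n(X_A)$, which yields \eqref{eq:dynamiczeta}.

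The only real obstacle is making the multiplicity accounting airtight, since one must check (i) that the correspondence $\tau\leftrightarrow \gamma_\tau$ is indeed bijective onto $P_{orb}(X_A)$ with the claimed length formula (this is the content of the paragraph preceding the proposition and will be invoked directly), (ii) that $S_n c$ is well defined and orbit-invariant on periodic points, so the $\tfrac{p(\gamma)}{n}$ absorption is legitimate, and (iii) that the rearrangement of the double series is valid, for which it suffices to work formally as an identity of power series in $e^{-s}$, or, for $\Re s$ sufficiently large, to invoke absolute convergence from the standard growth bound on $|\Per_n(X_A)|$ together with continuity of $c$ on the compact space $X_A$.
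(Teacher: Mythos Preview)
Your proof is correct and is exactly the ``routine argument as in \cite[p.~100]{PP}'' that the paper invokes: the paper's own proof consists only of citing Lemma~\ref{lem:period} (the bijection $\tau\leftrightarrow\gamma_\tau$ with $\ell(\tau)=\beta_{\gamma_\tau}(c)$) together with that reference, and what you have written is precisely that standard computation spelled out in full.
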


In \cite{BH}, Boyle--Handelman have studied 
a relationship between flow equivalence of two-sided topological Markov shifts
and dynamical zeta functions on them,
so that they 
have proved that
the set of zeta functions of homeomorphisms flow equivalent to 
the two-sided topological Markov shift
$(\bar{X}_A, \bar{\sigma}_A)$
is a complete invariant for flow equivalence class of 
$(\bar{X}_A, \bar{\sigma}_A)$.
If  $(X_A, \sigma_A)$ and $(X_B,\sigma_B)$ 
are continuously orbit equivalent
via a homeomorphism $h:X_A\rightarrow X_B$
with continuous functions
$k_1,l_1:X_A\rightarrow \Zp$ and
$k_2,l_2:X_B\rightarrow \Zp$ 
satisfying \eqref{eq:orbiteqx} and \eqref{eq:orbiteqy},
we may define  homomorphisms
$\Psi_h: C(X_B,\Z) \rightarrow C(X_A,\Z)$
by
\begin{equation}
\Psi_{h}(f)(x)
= \sum_{i=0}^{l_1(x)-1} f(\sigma_B^i(h(x))) 
- \sum_{j=0}^{k_1(x)-1} f(\sigma_B^j(h(\sigma_A(x))))
\label{eq:Psihfx}
\end{equation}
for
$f \in C(X_B,\Z), \ x \in X_A$ 
and similarly 
$\Psi_{h^{-1}}: C(X_A,\Z) \rightarrow C(X_B,\Z)$
for $h^{-1}: X_B \rightarrow X_A$.
In \cite{MMETDS},
it has been proved that 
$\Psi_h: C(X_B,\Z) \rightarrow C(X_A,\Z)$
induces an isomorphism of their ordered cohomology groups
$(H^B,H^B_+)$ 
and
$(H^A,H^A_+)$.
If $(X_A,\sigma_A)$ and $(X_B,\sigma_B)$
are one-sided flow equivalent,
they are continuously orbit equivalent, so that we may define 
the map $\Psi_h: C(X_B,\Z) \rightarrow C(X_A,\Z)$
as above.
Inspired by  \cite{BH},
we will show the following.
\begin{theorem}[{Theorem \ref{thm:fezeta}}]
Suppose that
$(X_A, \sigma_A)$ and $(X_B,\sigma_B)$
are one-sided flow equivalent 
via a homeomorphism
$h:X_A\rightarrow X_B$.
Then  for $f \in C(X_B, \Z), g \in C(X_A, \Z)$
such that
the classes $[f], [g]$ are order units of the ordered cohomology groups
$(H^B,H^B_+),(H^A,H^A_+)$, respectively.
Then 
we have
\begin{equation*}
 \zeta_{A,\Psi_h(f)}(s)  = \zeta_{B,f}(s),
 \qquad
 \zeta_{B,\Psi_{h^{-1}}(g)}(s)  = \zeta_{A,g}(s).
\end{equation*}
\end{theorem}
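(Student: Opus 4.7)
My plan is to pass to the product form of the dynamical zeta function,
\[
\zeta_{A,c}(s) \;=\; \prod_{\gamma_A \in P_{orb}(X_A)} \big(1 - e^{-s\, c(\gamma_A)}\big)^{-1},\qquad c(\gamma_A) := \sum_{a \in \gamma_A} c(a),
\]
obtained from \eqref{eq:dynamiczeta} by the standard resummation that groups periodic points of period $n$ according to the primitive orbit they traverse, and analogously on the $B$-side. Once this is done, the desired identity $\zeta_{A,\Psi_h(f)}(s)=\zeta_{B,f}(s)$ reduces to: (i) producing a bijection $\gamma_A\leftrightarrow\gamma_B$ of primitive periodic orbits induced by the COE homeomorphism $h$ (which exists by Theorem~\ref{thm:thm1.1}), and (ii) proving the orbit-by-orbit Birkhoff-sum identity $\sum_{a\in\gamma_A}\Psi_h(f)(a)=\sum_{b\in\gamma_B}f(b)$. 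The companion identity for $g$ is then obtained by exchanging the roles of $A$ and $B$ and using \eqref{eq:orbiteqy} in place of \eqref{eq:orbiteqx}.

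\paragraph{Bijection and telescoping.} For a primitive $\sigma_A$-orbit $\gamma_A=\{x,\sigma_A(x),\dots,\sigma_A^{n-1}(x)\}$, iterating \eqref{eq:orbiteqx} shows that $h(\gamma_A)$ lies on a single $\sigma_B$-orbit $\gamma_B$ of primitive period $m$, and the symmetric argument applied to $h^{-1}$ via \eqref{eq:orbiteqy} yields the reverse cardinality inequality, so $m=n$ and $h$ restricts to a bijection $\gamma_A\to\gamma_B$. Setting $a_i=\sigma_A^i(x)$, $b_i=h(a_i)=\sigma_B^{s_i}(b_0)$ with $s_i\in\Z/n\Z$, the orbit equation gives $s_{i+1}\equiv s_i+l_1(a_i)-k_1(a_i)\pmod n$. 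Substituting the identity $\sigma_B^{l_1(a_i)}(b_i)=\sigma_B^{k_1(a_i)}(b_{i+1})$ into \eqref{eq:Psihfx} lets me rewrite the second summand of $\Psi_h(f)(a_i)$ as the tail of the first, producing
\[
\Psi_h(f)(a_i) \;=\; \sum_{j=0}^{(l_1-k_1)(a_i)-1} f\big(\sigma_B^{s_i+j}(b_0)\big)
\]
(with the standard sign convention when $(l_1-k_1)(a_i)<0$). Summing over $i$, the consecutive integer intervals $[s_i,\,s_i+(l_1-k_1)(a_i))$ concatenate into one integer interval of length $L:=\sum_{i=0}^{n-1}(l_1-k_1)(a_i)$; since $j\mapsto f(\sigma_B^j(b_0))$ has period $n$, the full sum equals $(L/n)\,f(\gamma_B)$.

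\paragraph{The identity $L=n$ and conclusion.} The residual point is the equality $L=n$. By Corollary~\ref{cor:flmatrix}, a one-sided flow equivalence yields an abelian-group isomorphism $\Phi$ sending $[u_A]$ to $[u_B]$; under the natural identification of $H^A$ with $\Z^N/(\id-A)\Z^N$, $\Phi$ is the cohomology map induced by $\Psi_{h^{-1}}$, so the relation $\Phi([u_A])=[u_B]$ is equivalent to $\Psi_h([\mathbf{1}_B])=[\mathbf{1}_A]$; in other words, $\Psi_h(\mathbf{1}_B)=l_1-k_1$ is cohomologous to $\mathbf{1}_A$ on $X_A$. Since Birkhoff sums of cohomologous integer-valued functions along a periodic orbit coincide, $L=\sum_i\mathbf{1}_A(a_i)=n$, and hence $\sum_{a\in\gamma_A}\Psi_h(f)(a)=f(\gamma_B)$. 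Inserting this into the two product expansions gives the first identity factor-by-factor, and the second is proved identically with $h$ replaced by $h^{-1}$. The order-unit hypotheses on $[f]$ and $[g]$ serve only to replace them by strictly positive cohomologous representatives so that both infinite products converge as honest meromorphic functions for $\Re(s)$ large. The main obstacle is the identification $L=n$, which relies on Corollary~\ref{cor:flmatrix} and ultimately on \cite[Theorem~3.6]{MMETDS}; the rest is the telescoping bookkeeping indicated above.
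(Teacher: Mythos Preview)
Your overall strategy matches the paper's: pass to the Euler product, set up a bijection between primitive periodic orbits, and verify the Birkhoff-sum identity factor by factor. However, two of your key claims are false, and they are precisely where the real content lies.

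First, the assertion that $h$ restricts to a period-preserving bijection $\gamma_A \to \gamma_B$ (so that $m=n$) fails. A continuous-orbit-equivalence homeomorphism need not send periodic points to periodic points, and even when it does it need not preserve the period. In the paper's own example (the full $2$-shift and the golden mean shift), the fixed point $x = 2^\infty \in X_A$ has $h(x) = (21)^\infty$, of primitive period $2$; and $h^{-1}((12)^\infty) = 1\,2\,2\,2\cdots$ is not periodic at all, so your ``reverse cardinality inequality'' breaks down and the assumed periodicity of $j\mapsto f(\sigma_B^j(b_0))$ is unjustified. The correct bijection $\xi_h : P_{orb}(X_A) \to P_{orb}(X_B)$ is at the level of orbits rather than points and is built in \cite[Section~6]{MMETDS}; this is what the paper invokes.

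Second, your argument that $L=n$ rests on identifying $H^A$ with $\Z^N/(\id - A)\Z^N$ and concluding that $\Psi_h(\mathbf{1}_B) = c_1$ is cohomologous to $\mathbf{1}_A$. Both steps are wrong: the ordered cohomology group $H^A$ is not the Bowen--Franks group (for the full $2$-shift the latter is trivial while $[\mathbf{1}_A]\neq 0$ in $H^A$), and in the same example $c_1(2^\infty) = l_1(2^\infty) - k_1(2^\infty) = 2$, so the Birkhoff sum of $c_1$ over that fixed-point orbit is $2 \neq 1$, whence $[c_1] \neq [\mathbf{1}_A]$ in $H^A$. The paper instead uses Lemma~\ref{lem:4.3} together with \cite[Lemma~6.5]{MMETDS}, which says that the primitive period of $\xi_h(\gamma_A)$ is exactly $L = l_1^p(x) - k_1^p(x)$ (not $p$). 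Once that is known, the telescoped sum $\sum_{j=k_1^p(x)}^{l_1^p(x)-1} f(\sigma_B^j(h(x)))$ is already $\beta_{\xi_h(\gamma_A)}(f)$, and no division by any period is needed.
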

This theorem shows  that 
the set $Z(X_A,\sigma_A)$
of dynamical zeta functions of $(X_A,\sigma_A)$
whose potential functions are order units in the ordered cohomology
group $(H^A,H^A_+)$ is invariant under one-sided flow equivalence.
\medskip


Throughout the paper, 
we denote 
by $\R_+$, by $\Zp$ and 
by $\N$
the set of nonnegative real numbers,
the set of nonnegative integers and the set of positive integers, 
respectively.

\section{One-sided suspensions}
In what follows, we assume that 
$A =[A(i,j)]_{i,j=1}^N$
is an $N \times N$ matrix with entries in $\{0,1\}$
and $1<N \in \N$.
We further assume that $A$ is irreducible and not any permutation matrix.
This assumption is equivalent to the condition (I) in the sense of \cite{CK}
so that the space $X_A$ is homeomorphic to a Cantor discontinuum.  
We denote by $C(X_A,\R)$ (resp. $C(X_A,\R_+)$)
the set of real  (resp. nonnegative real)  
valued continuous functions on $X_A$.
The set $C(X_A,\Z)$ 
of integer valued continuous functions on $X_A$
has a natural structure of abelian group 
by pointwise sums.
Let us denote by $H^A$ the quotient group
of the abelian group $C(X_A, \Z)$ by the subgroup 
$\{ g - g \circ \sigma_A \mid g \in C(X_A,\Z) \}$.
The positive cone $H^A_+$ consists of the classes
$[f]\in H^A$ of nonnegative integer valued continuous functions 
$f \in C(X_A,\Zp)$.
The ordered group
$(H^A,H^A_+)$ is called the ordered cohomology group for 
$(X_A,\sigma_A)$ (cf. \cite{BH}, \cite{MMKyoto}, \cite{Po}).
For $f \in C(X_A,\Z)$ and $m \in \N$, we set 
$$
f^m(x) =\sum_{i=0}^{m-1}f(\sigma_A^i(x)), \qquad x \in X_A.
$$
An element $[f]$ in $H^A_+$ is  
called an order unit if for any $[g] \in H^A$,
there exists $n \in \N$ such that 
$n[f] -[g] \in H^A_+$.
We see that 
$[f]\in H^A_+$ is an order unit if and only if 
there exists $m \in \N$ such that 
$f^m$ is strictly positive (\cite[1,7]{BH}).

In this section,
we will first define a one-sided suspension
for a one-sided topological Markov shift.
The triplet
$(l,k,b)$ for real valued continuous functions
$l,k \in C(X_A,\R_+)$ and $b \in C(X_A,\R)$
are said to be {\it suspension triplet}\/
for $(X_A,\sigma_A)$
if they satisfy the following two conditons:
\begin{enumerate}
\item The difference $c = l-k$ belongs to $C(X_A,\Z)$
and the class $[c]\in H_+^A$ is an order unit of $(H^A,H_+^A)$.
\item The differences $l-b, \, k - b\circ \sigma_A$  
belong to $C(X_A,\Zp)$. 
\end{enumerate}
The triplet $(1,0,0)$ is called the standard suspension triplet.

We fix a suspension triplet $(l,k,b)$ for $(X_A,\sigma_A)$
for a while.
We set $X_{A,b}^{\R} = \{(x,r) \in X_A\times \R \mid r \ge b(x)\}$
and 
define an equivalence relation
$\underset{l,k}{\sim}$ in 
$X_{A,b}^{\R}$ generated by the relations
\begin{equation}
(x,r) \underset{l,k}{\sim} (\sigma_A(x), r - c(x)) \qquad 
\text{ for } r \ge l(x).
\end{equation}
We note that the condition $r\ge l(x)$ implies $(x,r) \in X_{A,b}^{\R}$
and
$r-c(x) = r-l(x) +k(x) \ge k(x) \ge b(\sigma_A(x))$
so that 
$(\sigma_A(x), r - c(x)) \in X_{A,b}^{\R}$. 
If there exists $n \in \Zp$ such that 
$r \ge c^m(x) + l(\sigma_A^m(x))$ 
for all $m\in \Zp$ with $0\le m < n$,
then 
\begin{equation*}
(x,r) \underset{l,k}{\sim} 
(\sigma_A(x), r - c(x)) \underset{l,k}{\sim} \cdots
\underset{l,k}{\sim} 
(\sigma_A^n(x), r - c^n(x)).
\end{equation*} 
Hence we have
\begin{lemma}\label{lem:equiviff}
For $(x,r), (x',r') \in X_{A,b}^{\R},$
we have
$(x,r)
\underset{l,k}{\sim}
 (x',r')
$
if and only if
there exist $n, n' \in \Zp$ such that 
\begin{gather*}
\sigma_A^n(x)=\sigma_A^{n'}(x'),
\quad
r -c^n(x) =r' -c^{n'}(x') \quad{ and}\\
r - c^m(x) \ge l(\sigma_A^m(x)),
\quad  
r - c^{m'}(x') \ge l(\sigma_A^{m'}(x')) 
\quad \text{ for } 
\quad
0\le m <n, \,
0\le m' <n'.  
\end{gather*}
\end{lemma}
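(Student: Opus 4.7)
The plan is to exhibit the relation $R$ on $X_{A,b}^{\R}$ defined by the right-hand side of the lemma and to show that it coincides with $\underset{l,k}{\sim}$. Since $\underset{l,k}{\sim}$ is by construction the smallest equivalence relation on $X_{A,b}^{\R}$ containing the basic generating move $(x,r)\leftrightarrow(\sigma_A(x),r-c(x))$ (valid when $r\ge l(x)$), it suffices to prove the two inclusions $R\subseteq \underset{l,k}{\sim}$ and $\underset{l,k}{\sim}\subseteq R$.

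The inclusion $R\subseteq \underset{l,k}{\sim}$ (the ``if'' direction) is essentially the chain of basic moves displayed just above the lemma: given parameters $n,n'$ witnessing $(x,r)\,R\,(x',r')$, the hypothesis $r-c^m(x)\ge l(\sigma_A^m(x))$ for $0\le m<n$ is exactly what is needed to apply the basic move at each intermediate step, yielding $(x,r)\underset{l,k}{\sim}(\sigma_A^n(x),r-c^n(x))$; the same argument on the primed side gives $(x',r')\underset{l,k}{\sim}(\sigma_A^{n'}(x'),r'-c^{n'}(x'))$; the two targets coincide by the first two equalities of the lemma; and symmetry and transitivity of $\underset{l,k}{\sim}$ close the argument.

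For the converse, I would show that $R$ is itself an equivalence relation containing the basic move, so that minimality of $\underset{l,k}{\sim}$ forces $\underset{l,k}{\sim}\subseteq R$. Reflexivity ($n=n'=0$, with empty height conditions), symmetry (swap the two triples), and containment of the basic generator ($n=1,n'=0$) are immediate. Transitivity is the substantive step: given $(x,r)\,R\,(x',r')$ with parameters $(n_1,n_1')$ and $(x',r')\,R\,(x'',r'')$ with $(n_2,n_2')$, I would split on whether $n_1'\le n_2$ or $n_1'\ge n_2$. In the first case the parameters $(n_1+n_2-n_1',\,n_2')$ should witness $(x,r)\,R\,(x'',r'')$, obtained by extending the chain on the $(x,r)$ side forward so that it lands on the shared point $\sigma_A^{n_2}(x')=\sigma_A^{n_2'}(x'')$; the second case is symmetric with $(n_1,\,n_2'+n_1'-n_2)$. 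The common-target equation reduces to the cocycle identity $c^{p+q}(y)=c^p(y)+c^q(\sigma_A^p(y))$ combined with the two given $r$-equalities.

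The main obstacle is the bookkeeping behind the extended height inequalities under transitivity. For example, in the case $n_1'\le n_2$ one must verify $r-c^m(x)\ge l(\sigma_A^m(x))$ for the fresh range $n_1\le m<n_1+n_2-n_1'$; writing $m=n_1+j$ and using $\sigma_A^{n_1}(x)=\sigma_A^{n_1'}(x')$ together with the cocycle identity, this inequality translates into $r'-c^{n_1'+j}(x')\ge l(\sigma_A^{n_1'+j}(x'))$, which for $0\le j<n_2-n_1'$ is exactly one of the hypotheses on $(x',r')$ supplied by the second pair. The symmetric case uses the height hypotheses from the first pair in the same way. Once this index translation is laid out cleanly, the remainder of the verification is routine.
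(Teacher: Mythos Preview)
Your proposal is correct but follows a genuinely different route from the paper. The paper's proof of the ``only if'' direction is a short graph-theoretic argument: one views $X_{A,b}^{\R}$ as a directed graph with an edge from $(x,r)$ to $(\sigma_A(x),r-c(x))$ whenever $r\ge l(x)$, and observes that every vertex has out-degree at most one. Given any zigzag path realizing $(x,r)\underset{l,k}{\sim}(x',r')$, any ``local source'' (a vertex from which both adjacent path-edges point outward) must have equal neighbors and can be excised; iterating this collapses the path to the normal form ``forward $n$ steps from $(x,r)$, then backward $n'$ steps to $(x',r')$'', which is exactly the statement of the lemma. Your approach instead shows directly that the right-hand relation $R$ is an equivalence relation containing the generator, and invokes minimality of $\underset{l,k}{\sim}$; the substantive content is the transitivity bookkeeping, which you handle correctly via the case split $n_1'\le n_2$ versus $n_1'\ge n_2$ and the cocycle identity $c^{p+q}(y)=c^p(y)+c^q(\sigma_A^p(y))$. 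The paper's argument is shorter and more geometric, exploiting the out-degree-one feature to avoid all index bookkeeping; your argument is more algebraic and makes the equivalence-relation structure explicit, at the cost of the index verification you outline.
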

\begin{proof}
It suffices to prove the only if part.
For $(x,r) \in X_{A,b}^{\R}$
with $r \ge l(x)$,
we write a directed edge from
$(x,r)$ to 
$(\sigma_A(x),r-c(x))$.
We then have a directed graph with vertex set 
$X_{A,b}^{\R}$.
Suppose 
$(x,r)
\underset{l,k}{\sim}
 (x',r').
$
There exists a finite sequence
$(x_i,r_i) \in X_{A,b}^{\R}, i=0,1,\dots, L$
such that 
$(x_0,r_0) = (x,r)$
and
$(x_L,r_L) = (x',r')$,
and
there exists a directed edge 
from
$(x_{i-1}, r_{i-1})$ to $(x_i,r_i)$
or
from
$(x_i, r_i)$ to $(x_{i-1},r_{i-1})$
for each $i=1,2,\dots,L$.
Since each vertex $(x_i,r_i)$ 
emits  at most  one directed edge,
we may find $n$ with $0\le n \le L$
such that 
there exist directed edges 
from
$(x_{i-1}, r_{i-1})$ to $(x_i,r_i)$
for $i=1,2,\dots, n$
and
from
$(x_i, r_i)$ to $(x_{i-1},r_{i-1})$
for $i=n+1,\dots,L$.
By putting $n' = L-n$,
we see that $n$ and $n'$ 
satisfy the desired conditions for $(x,r)$ and $(x',r')$.
\end{proof}
Define a topological space
\begin{equation}
S_{A,b}^{l,k} = X_{A,b}^{\R}/ \underset{l,k}{\sim}
\end{equation}
as the quotient topological space of $X_{A,b}^{\R}$ 
by the equivalence relation $\underset{l,k}{\sim}$.
We denote by $[x,r]$ the class of 
$(x,r) \in X_{A,b}^{\R}$ in the quotient space 
$
S_{A,b}^{l,k}.
$
We will show that 
$
S_{A,b}^{l,k} 
$
is a compact Hausdorff space.
We note the following lemma.
\begin{lemma}\label{lem:orderunitpos}
For any $m \in\N$, there exists $n_m\in \N$ such that
$c^{n_m}(x) \ge m$ for all $x \in X_A$. 
\end{lemma}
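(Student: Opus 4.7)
The plan is to exploit the characterization of order units stated just above the lemma: namely, since $[c] \in H^A_+$ is an order unit (by condition (1) of being a suspension triplet), there exists $p \in \N$ such that $c^p$ is strictly positive on $X_A$. Because $c = l-k$ is integer-valued and $X_A$ is compact, strict positivity of $c^p$ automatically gives the uniform lower bound $c^p(x) \ge 1$ for every $x \in X_A$.

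Next I would use the standard cocycle identity for the Birkhoff-type sum $c^n$, namely
\begin{equation*}
c^{n+n'}(x) = c^n(x) + c^{n'}(\sigma_A^n(x)),
\end{equation*}
which follows directly from the definition $c^n(x) = \sum_{i=0}^{n-1} c(\sigma_A^i(x))$. Iterating this decomposition along the blocks of length $p$ yields
\begin{equation*}
c^{mp}(x) = \sum_{j=0}^{m-1} c^p(\sigma_A^{jp}(x)) \ge \sum_{j=0}^{m-1} 1 = m
\end{equation*}
for every $x \in X_A$. Hence setting $n_m := mp$ proves the lemma.

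There is essentially no obstacle here: the only input beyond the definitions is the order-unit criterion recalled from \cite{BH} just before the lemma statement, and the rest is the cocycle identity together with the fact that integer-valued strictly positive continuous functions on a compact space are bounded below by $1$. The lemma is precisely the quantitative reformulation of the order-unit property that will be needed in subsequent arguments (for instance, to show that equivalence classes in $S_{A,b}^{l,k}$ can be represented with bounded vertical coordinate, and thus that $S_{A,b}^{l,k}$ is compact Hausdorff).
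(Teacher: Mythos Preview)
Your proof is correct and follows essentially the same approach as the paper's own argument: both use the order-unit criterion to obtain $p$ with $c^p \ge 1$, then the cocycle identity for $c^n$ to conclude $c^{mp} \ge m$ and set $n_m = mp$. The only cosmetic difference is that the paper writes the inductive step $c^{mp}(x) = c^{(m-1)p}(x) + c^p(\sigma_A^{(m-1)p}(x))$ rather than your full block decomposition, but the content is identical.
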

\begin{proof}
Since $[c] \in H^A_+$ is an order unit,
one may take $p\in \N$ such that 
$c^p$ is a strictly positive function
so that 
$c^p(x) \ge 1$ for all
$x \in X_A$.
By the identity
$c^{mp}(x) = c^{(m-1)p}(x) + c^p(\sigma_A^{(m-1)p}(x))
$ for $m\in \N, x \in X_A$,
one obtains that
$c^{mp}(x) \ge m$ for all $x \in X_A$.
By putting $n_m = mp$, we see the desired assertion.
\end{proof}
We set
\begin{align*}
\Omega_{A,b}^l & = \{(x,r) \in X_{A,b}^{\R}\mid b(x) \le r \le l(x) \},\\
{\Omega}_{A,b}^{l\circ} 
& = \{(x,r) \in X_{A,b}^{\R}\mid b(x) \le r < l(x) \}.
\end{align*}
\begin{lemma}\label{lem:Omegacirc}
For $(x,r) \in X_{A,b}^{\R}$
with $r \ge l(x)$,
there exists
$(z,s) \in {\Omega}_{A,b}^{l\circ}$
such that 
$
(x,r) \underset{l,k}{\sim} (z,s).
$
\end{lemma}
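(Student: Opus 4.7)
The plan is to run the one-step move $(y,t)\underset{l,k}{\sim}(\sigma_A(y),t-c(y))$ repeatedly, starting from the given $(x,r)$, and to show that after finitely many steps the height coordinate drops below $l$ while staying above $b$. Concretely, I would set $x_0=x$, $r_0=r$, and whenever $r_i\ge l(x_i)$, define $x_{i+1}=\sigma_A(x_i)$ and $r_{i+1}=r_i-c(x_i)$; as long as the process continues, $x_i=\sigma_A^i(x)$ and $r_i=r-c^i(x)$.

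Two things then need checking. First, each $(x_i,r_i)$ must lie in $X_{A,b}^{\R}$, i.e.\ $r_i\ge b(x_i)$. This is where condition (2) of the suspension triplet is used: assuming $r_i\ge l(x_i)$,
\[
r_{i+1}=r_i-l(x_i)+k(x_i)\ge k(x_i)\ge b(\sigma_A(x_i))=b(x_{i+1}),
\]
because $k-b\circ\sigma_A\in C(X_A,\R_+)$, so the iterate remains in the domain of the equivalence relation. Second, the procedure must terminate: I would let $n$ be the smallest index $\ge 0$ for which $r-c^n(x)<l(\sigma_A^n(x))$. Since $r\ge l(x)$ the failure at $n=0$ is strict, so $n\ge 1$ whenever it exists. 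Setting $(z,s)=(x_n,r_n)=(\sigma_A^n(x),r-c^n(x))$, I then have $s<l(z)$ by the choice of $n$ and $s\ge b(z)$ by the previous display applied at $i=n-1$, so $(z,s)\in\Omega_{A,b}^{l\circ}$. The equivalence $(x,r)\underset{l,k}{\sim}(z,s)$ follows by composing the $n$ elementary relations along the chain $(x_0,r_0)\underset{l,k}{\sim}\cdots\underset{l,k}{\sim}(x_n,r_n)$.

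The only nontrivial point is the existence of the terminating index $n$, and this is precisely where the order-unit hypothesis on $[c]$ enters, via Lemma \ref{lem:orderunitpos}. That lemma provides, for every $M\in\N$, an integer $n_M$ with $c^{n_M}(y)\ge M$ for all $y\in X_A$. Picking any integer $M>r$, I get $r-c^{n_M}(x)<0\le l(\sigma_A^{n_M}(x))$, so the set of $n\ge 0$ satisfying the descent inequality is nonempty and admits a minimum. This is the main step; everything else is routine bookkeeping with the suspension-triplet conditions. Without the order-unit assumption the iterates $r-c^n(x)$ could stay above $l$ indefinitely, and the lemma would fail, so this is genuinely where the structural hypothesis on $(l,k,b)$ is needed.
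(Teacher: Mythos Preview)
Your argument is correct and is essentially the paper's own proof. Your stopping index --- the least $n$ with $r-c^n(x)<l(\sigma_A^n(x))$ --- is exactly the paper's ``minimum $n$ with $r<c^{n+1}(x)+k(\sigma_A^n(x))$'' since $c^{n+1}(x)+k(\sigma_A^n(x))=c^n(x)+l(\sigma_A^n(x))$; both proofs use Lemma~\ref{lem:orderunitpos} for termination and the inequality $k\ge b\circ\sigma_A$ for the lower bound $s\ge b(z)$.
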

\begin{proof}
For $(x,r) \in X_{A,b}^{\R}$
with $r \ge l(x)$,
by Lemma \ref{lem:orderunitpos}
one may take
 a minimum number $n \in \Zp$
 satisfying 
$r < c^{n+1}(x) + k(\sigma_A^n(x))$,
so that we have
\begin{equation*}
  c^m(x) +k(\sigma_A^{m-1}(x)) 
\le r 
< c^{n+1}(x) + k(\sigma_A^n(x)) 
\quad\text{ for  all }
\quad
m \le n.
\end{equation*}
In particular, we have
\begin{equation*}
  c^n(x) +k(\sigma_A^{n-1}(x)) 
\le r 
< c^{n+1}(x) + k(\sigma_A^n(x)) .
\end{equation*}
As 
$c^{n+1}(x) = c^n(x) + l(\sigma_A^n(x))-k(\sigma_A^n(x))$
and 
$b(\sigma_A^n(x)) \le k(\sigma_A^{n-1}(x))$,
we get
\begin{equation*}
b(\sigma_A^n(x)) \le  r - c^n(x) < l(\sigma_A^n(x)).
\end{equation*}
Since
$
 r - c^m(x) \ge l(\sigma_A^m(x)) 
$
for all
$ m \in \Zp $
with
$m < n,
$
we see that 
$$
(x,r) \underset{l,k}{\sim}
(\sigma_A^m(x),  r - c^m(x))
\quad
 \text{ for all } m \in \Zp 
 \text{ with }
 m \le n.
$$
By putting
$z = \sigma_A^n(x), s = r - c^n(x)$, 
we have
$
(x,r) \underset{l,k}{\sim} (z,s)
$
and
$(z,s) \in {\Omega}_{A,b}^{l\circ}$.
\end{proof}
\begin{proposition}
$
S_{A,b}^{l,k} 
$
is a compact Hausdorff space.
\end{proposition}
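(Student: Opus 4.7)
The plan is to identify $S_{A,b}^{l,k}$ with the quotient $\Omega_{A,b}^l/{\equiv}$, where $\equiv$ denotes the restriction of $\underset{l,k}{\sim}$ to the compact fundamental domain $\Omega_{A,b}^l$, and then to deduce both compactness and the Hausdorff property from standard quotient theory. Since $l$ and $b$ are continuous on the compact space $X_A$ they are bounded, so $\Omega_{A,b}^l$ is a closed bounded subset of $X_A\times\R$, hence compact. By Lemma~\ref{lem:Omegacirc}, the restriction to $\Omega_{A,b}^l$ of the quotient map $q\colon X_{A,b}^{\R}\to S_{A,b}^{l,k}$ is surjective, so $S_{A,b}^{l,k}$ is compact as the continuous image of a compact space.

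For the Hausdorff property I would first verify that $\equiv$ is closed in $\Omega_{A,b}^l\times\Omega_{A,b}^l$. Suppose $(x_n,r_n)\equiv(x_n',r_n')$ converge to $(x,r)$ and $(x',r')$ respectively. Lemma~\ref{lem:equiviff} produces $m_n,m_n'\in\Zp$ with $\sigma_A^{m_n}(x_n)=\sigma_A^{m_n'}(x_n')$, $r_n-c^{m_n}(x_n)=r_n'-c^{m_n'}(x_n')$, and the companion chain inequalities. The latter give $c^{m_n-1}(x_n)\le r_n$, and since $r_n,r_n'$ are bounded above by $\max l$ on $\Omega_{A,b}^l$, Lemma~\ref{lem:orderunitpos} keeps $m_n,m_n'$ bounded; passing to a subsequence on which both are constant and using continuity of the iterates of $\sigma_A$ and $c$ transfers the conditions to the limit, so $(x,r)\equiv(x',r')$. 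The classical theorem that a quotient of a compact Hausdorff space by a closed equivalence relation is itself compact Hausdorff then gives that $\Omega_{A,b}^l/{\equiv}$ is compact Hausdorff.

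It remains to identify $S_{A,b}^{l,k}$ with $\Omega_{A,b}^l/{\equiv}$ topologically. The composition of the inclusion $\Omega_{A,b}^l\hookrightarrow X_{A,b}^{\R}$ with $q$ descends to a continuous bijection $f\colon\Omega_{A,b}^l/{\equiv}\to S_{A,b}^{l,k}$, surjective by Lemma~\ref{lem:Omegacirc} and injective because $\equiv$ is exactly the restriction of $\underset{l,k}{\sim}$. I would promote $f$ to a homeomorphism by showing it is a closed map. Setting $T(x,r)=(\sigma_A(x),r-c(x))$ on its domain $\{r\ge l(x)\}$, one checks using Lemma~\ref{lem:Omegacirc} together with the $\equiv$-saturation of $C_0$ that for any closed $\equiv$-saturated $C_0\subset\Omega_{A,b}^l$ the $\underset{l,k}{\sim}$-saturation of $C_0$ in $X_{A,b}^{\R}$ equals $[C_0]=\bigcup_{n\ge 0}(T^n)^{-1}(C_0)$; each summand is closed in $X_{A,b}^{\R}$ because $\mathrm{dom}(T^n)$ is closed and $T^n$ is continuous on it. For a convergent sequence $(z_n,t_n)\in[C_0]$ with $T^{k_n}(z_n,t_n)\in C_0$, the bound $c^{k_n-1}(z_n)\le t_n$ together with Lemma~\ref{lem:orderunitpos} forces $k_n$ to be bounded, so a constant subsequence places the limit in the closed set $(T^k)^{-1}(C_0)\subset[C_0]$. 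The step I expect to be the main obstacle is exactly this closedness of $[C_0]$ in the non-compact space $X_{A,b}^{\R}$: the countable union $\bigcup_n(T^n)^{-1}(C_0)$ is not a priori closed, but Lemma~\ref{lem:orderunitpos} ensures that only finitely many $n$ contribute on any bounded slab $\{r\le M\}$, making the union locally finite and therefore closed.
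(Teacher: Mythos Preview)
Your argument is correct, and the overall strategy---reduce to the compact set $\Omega_{A,b}^l$ and show that the natural map $\Omega_{A,b}^l/{\equiv}\to S_{A,b}^{l,k}$ is a homeomorphism---is the same as the paper's. The execution, however, differs in two places, and it is worth noting what each approach buys.

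For Hausdorffness of $\Omega_{A,b}^l/{\equiv}$, the paper simply asserts that $\Omega_{A,b}^{l\circ}$ is a fundamental domain. You instead verify that $\equiv$ is a closed subset of $\Omega_{A,b}^l\times\Omega_{A,b}^l$ (using Lemma~\ref{lem:equiviff} and the bound coming from Lemma~\ref{lem:orderunitpos}) and invoke the standard fact that a quotient of a compact Hausdorff space by a closed equivalence relation is Hausdorff. Your route is more explicit; note that the bound on the $m_n$ really does follow from Lemma~\ref{lem:orderunitpos} as stated, since the chain inequalities give $c^{m}(x_n)\le r_n$ for every $m<m_n$, and one application of that lemma with $m>\max l$ suffices.

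For the homeomorphism, the paper produces an explicit retraction $\varphi\colon X_{A,b}^{\R}\to\Omega_{A,b}^l$ and checks that it induces a two-sided inverse $\tilde\varphi$ to the inclusion-induced map $\tilde\iota$. You instead show directly that $\tilde\iota$ is closed by proving that the $\underset{l,k}{\sim}$-saturation of a closed $\equiv$-saturated $C_0\subset\Omega_{A,b}^l$ equals the locally finite union $\bigcup_{n\ge 0}(T^n)^{-1}(C_0)$; the key point---that Lemma~\ref{lem:orderunitpos} forces only finitely many $n$ to contribute on any slab $\{r\le M\}$---is exactly right. The paper's approach is shorter once $\varphi$ is in hand, but the continuity of $\varphi$ as a map into $\Omega_{A,b}^l$ is delicate (it jumps where $r=c^n(x)+l(\sigma_A^n(x))$, and is really only continuous after composing with $q_\Omega$), whereas your closed-map argument avoids this subtlety.
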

\begin{proof}
We may restrict the equivalence relation
$\underset{l,k}{\sim}$
to
${\Omega}_{A,b}^{l}$.
Let
$q_\Omega:
{\Omega}_{A,b}^{l}\rightarrow {\Omega}_{A,b}^{l}/\underset{l,k}{\sim}
$
and
$q_X:
{X}_{A,b}^{\R}\rightarrow {S}_{A,b}^{l,k}
$
be the quotient maps, which are continuous.
Since
${\Omega}_{A,b}^{l}$
is compact, so is 
${\Omega}_{A,b}^{l}/\underset{l,k}{\sim}.
$
By Lemma \ref{lem:Omegacirc},
an element of 
${\Omega}_{A,b}^{l}/\underset{l,k}{\sim}
$
is represented by
${\Omega}_{A,b}^{l\circ}/\underset{l,k}{\sim}$.
This implies that
${\Omega}_{A,b}^{l}/\underset{l,k}{\sim}
$
is Hausdorff,
because ${\Omega}_{A,b}^{l\circ}$ 
is a fundamental domain
of the quotient space
${\Omega}_{A,b}^{l}/\underset{l,k}{\sim}$.
For $(x,r) \in X_{A,b}^\R$,
take a minimum number $n \in \Zp$
such that 
$r < c^{n+1}(x) + k(\sigma_A^n(x))$,
and define a continuous map
$\varphi:X_{A,b}^\R \rightarrow \Omega_{A,b}^l$
by setting
$\varphi((x,r)) = (\sigma_A^n(x), r - c^n(x))$.
By the proof of Lemma \ref{lem:Omegacirc},
it induces a map
$\tilde{\varphi}:S_{A,b}^{l,k}\rightarrow 
{\Omega}_{A,b}^{l}/\underset{l,k}{\sim}.
$
We will see that $\tilde{\varphi}$ is a homeomorphism,
so that 
$S_{A,b}^{l,k}$ is a compact Hausdorff space.
As the inclusion map 
$\iota:(x,r) \in {\Omega}_{A,b}^{l} \rightarrow (x,r) \in X_{A,b}^\R$
induces a map
$$
\tilde{\iota}: 
[x,r] \in {\Omega}_{A,b}^{l}/\underset{l,k}{\sim}
\quad \rightarrow 
\quad 
[x,r] \in S_{A,b}^{l,k}
$$
which satisfies
$\tilde{\varphi}\circ\tilde{\iota} =\id, \, 
\tilde{\iota}\circ\tilde{\varphi} =\id$,
the map
$\tilde{\varphi}$ is bijective.
We have commutative diagrams:
\begin{equation*}
{
\begin{CD}
X_{A,b}^\R @>\varphi>> {\Omega}_{A,b}^{l} \\
@V{q_X}VV  @VV{q_\Omega}V \\
S_{A,b}^{l,k} @>\tilde{\varphi}>> 
{\Omega}_{A,b}^{l}/\underset{l,k}{\sim} 
\end{CD},
}
\qquad\quad
{
\begin{CD}
X_{A,b}^\R @<\iota<< {\Omega}_{A,b}^{l} \\
@V{q_X}VV  @VV{q_\Omega}V \\
S_{A,b}^{l,k} @<\tilde{\iota}<< 
{\Omega}_{A,b}^{l}/\underset{l,k}{\sim} 
\end{CD}
}
\end{equation*}
Since both the maps
$\varphi: X_{A,b}^\R \rightarrow {\Omega}_{A,b}^{l}$
and
$\iota: {\Omega}_{A,b}^{l} \rightarrow X_{A,b}^\R$
are continuous,
the commutativity of the diagrams 
imply the continuity of the maps
$\tilde{\varphi}: S_{A,b}^{l,k} 
\rightarrow \Omega_{A,b}^l/\underset{l,k}{\sim}$
and
$\tilde{\iota}: \Omega_{A,b}^l/\underset{l,k}{\sim}
\rightarrow S_{A,b}^{l,k}$.
Hence
$\tilde{\varphi}: S_{A,b}^{l,k} 
\rightarrow \Omega_{A,b}^l/\underset{l,k}{\sim}$
is a homeomorphism.
\end{proof}

We will define  the flow 
$\phi_{A,t}, t \in \R_+$ on $S_{A,b}^{l,k}$ 
by 
$\phi_{A,t}([x,r]) = [x,r+t]$ for $t \in \R_+$.
As in the discussions above,
for $(x,r) \in X_{A,b}^{\R}$ and $t \in \R_+$,
there exists $n \in \Zp$ such that 
\begin{equation*}
b(\sigma_A^n(x)) \le t + r - c^n(x) < l(\sigma_A^n(x))
\end{equation*}
and
\begin{equation*}
\phi_{A,t}([x,r]) = [\sigma_A^n(x), t + r - c^n(x)].
\end{equation*}
Hence the flow $\phi_{A,t}([x,r]),t \in \R_+$
are defined in $S_{A,b}^{l,k}$.
We call the flow space 
$(S_{A,b}^{l,k}, \phi_{A})$
the $(l,k,b)$-suspension of one-sided topological Markov shift
$(X_A, \sigma_A)$.
It is simply called the {\it one-sided suspension}\/
of $(X_A, \sigma_A)$.
The map
$b_A: X_A \rightarrow S_{A,b}^{l,k}$
defined by
$b_A(x) = [x,b(x)]$
is called the base map. 
The base map
for $b\equiv 0$
is written
$s_A(x) =[x,0]$
and called 
the standard base map.
If all the functions
$l,k, b$ are  valued in integers, 
$(l,k,b)$-suspension is called 
$(l,k,b)$-discrete suspension.
For $l\equiv 1, k\equiv 0, b\equiv 0$, 
the $(1,0,0)$-suspension 
$(S_{A,0}^{1,0}, \phi_A)$
is called the standard one-sided suspension.
The $(l,k,0)$-suspension space $S_{A,0}^{l,k}$
and the $(l,0,0)$-suspension space $S_{A,0}^{l,0}$
are denoted by
$S_{A}^{l,k}$ and $S_{A}^{l}$, respectively.
Hence the standard one-sided suspension
$(S_{A,0}^{1,0}, \phi_A)$
is denoted by
$(S_{A}^{1}, \phi_A)$.
\begin{lemma}
The standard base map
$s_A: X_A \rightarrow S_{A}^{1}$
for the standard suspension
is an injective homeomorphism.
\end{lemma}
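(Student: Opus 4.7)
The plan is to verify three properties in sequence: injectivity, continuity, and that the continuous inverse (from the image back to $X_A$) exists via a compactness/Hausdorff argument.

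First I would establish injectivity directly from Lemma \ref{lem:equiviff}. For the standard triplet we have $l\equiv 1$, $k\equiv 0$, $b\equiv 0$, so $c = l-k \equiv 1$ and $c^m(x)=m$ for every $x\in X_A$. Suppose $s_A(x) = s_A(y)$, i.e.\ $[x,0]=[y,0]$ in $S_A^1$. By Lemma \ref{lem:equiviff} there exist $n,n'\in\Zp$ with $\sigma_A^n(x)=\sigma_A^{n'}(y)$, $0-n = 0-n'$ (so $n=n'$), and $0 - c^m(x) \ge l(\sigma_A^m(x))=1$ for $0\le m < n$. The last condition reads $-m\ge 1$, which is impossible for any $m\ge 0$; hence $n=n'=0$, and therefore $x=y$. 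This gives injectivity.

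Next I would note that $s_A$ is continuous: it is the composition of the continuous inclusion $X_A \to X_{A,0}^{\R}$, $x\mapsto(x,0)$, with the quotient map $q_X: X_{A,0}^{\R}\to S_A^1$, both of which are continuous.

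Finally I would invoke the standard topological fact: a continuous injection from a compact space to a Hausdorff space is automatically a topological embedding (a homeomorphism onto its image). The space $X_A$ is compact by hypothesis (it is a closed shift-invariant subset of a finite product space), and $S_A^1$ is Hausdorff by the proposition just established. Hence $s_A:X_A\to s_A(X_A)\subset S_A^1$ is closed, and its set-theoretic inverse on the image is continuous, so $s_A$ is an injective homeomorphism. I do not foresee any serious obstacle here; the only subtle point is the unpacking of Lemma \ref{lem:equiviff} for injectivity, which, thanks to the constancy of $c$ and $l$ in the standard case, forces $n=0$ immediately.
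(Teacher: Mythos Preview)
Your proof is correct. The injectivity argument differs from the paper's: you invoke Lemma~\ref{lem:equiviff} and observe that for the standard triplet the inequality constraint $-m\ge 1$ is vacuous only when $n=n'=0$, forcing $x=y$ immediately. The paper instead returns to the raw chain definition of $\underset{1,0}{\sim}$, takes a chain $(x_i,r_i)$ from $(x,0)$ to $(z,0)$, picks the maximal height $K=\max_i r_i$ attained at some index $i_K$, and concludes $\sigma_A^K(x_{i_K})=x$ and $\sigma_A^K(x_{i_K})=z$ (the point being that $\sigma_A^{r_i}(x_i)$ is invariant along any chain in the standard suspension). Your route is shorter and makes better use of the lemma already on hand; the paper's route is more self-contained but essentially re-derives a special case of Lemma~\ref{lem:equiviff}. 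You also spell out the continuity of $s_A$ and the compact-to-Hausdorff embedding step, which the paper compresses into ``It suffices to show the injectivity''; making that explicit is a minor improvement in clarity with no change in substance.
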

\begin{proof}
It suffices to show the injectivity of the map $s_A$.
Suppose that
$s_A(x) = s_A(z)$ in $S_{A}^{1}$
for some $x,z \in X_A$.
There exists a finite sequence
$(x_i,r_i) \in X_{A,0}^{\R}$
such that
$$
(x,0) \underset{1,0}{\sim} 
(x_1,r_1) \underset{1,0}{\sim} \cdots \underset{1,0}{\sim}
(x_n,r_n) \underset{1,0}{\sim} 
(z,0)
$$ 
where
$r_i\in \Zp, i=1,\dots,n.$
If $r_i = r_{i+1}$, we may take $x_i = x_{i+1}$.
Let
$K = \Max\{r_i \mid i=1,\dots,n\}$.
Take $i_K \in \{1,\dots,n\}$ such that
$r_{i_K} = K$.
It then follows that
$\sigma_A^K(x_{i_K}) = x$ and
$\sigma_A^K(x_{i_K}) = z$,
so that $x = z$.
\end{proof}
\begin{lemma}
For a suspension triplet
$(l,k,b)$ for $(X_A,\sigma_A)$,
put $c = l-k$.
If $c' \in C(X_A,\Z)$  satisfies $[c] = [c'] $ in $H^A$,
there exists a suspension triplet
$(l',k',b')$ for $(X_A,\sigma_A)$ 
such that
$c' = l'-k'$
and
there exists a homeomorphism
$\Phi: S_{A,b}^{l,k} \rightarrow S_{A,b'}^{l',k'}
$
satisfying
\begin{equation}
\Phi \circ b = b', \qquad \Phi \circ \phi_{A,t} = \phi_{A,t}\circ \Phi\quad
\text{ for } t \in \R_+. \label{eq:2.6}
\end{equation}
Hence  
$(S_{A,b}^{l,k}, \phi_A)$ and $( S_{A,b'}^{l',k'}, \phi_A)$
are topologically conjugate
compartible to their base maps.
\end{lemma}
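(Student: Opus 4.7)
The plan is to realize the hypothesized cohomologous relation $[c]=[c']$ in $H^A$ as a concrete coboundary $c'-c = g\circ\sigma_A - g$ for some $g\in C(X_A,\Z)$, and then transport the suspension by a fiberwise vertical shift by $-g$. Normalizing $g$ by subtracting a large integer constant, I would then read off the new triplet and define $\Phi$ as the induced map on the quotient.

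More concretely, first I would pick $g\in C(X_A,\Z)$ with $c(x)-c'(x) = g(x) - g(\sigma_A(x))$. Because $X_A$ is compact, $g$, $l$, $k$ are bounded; I would choose an integer $c_0$ small enough that both $l(x)-g(x)-c_0 \ge 0$ and $k(x)-g(\sigma_A(x))-c_0\ge 0$ for every $x$, and replace $g$ by $\tilde g := g+c_0$ (this does not change the coboundary $g-g\circ\sigma_A$). I would then set
\begin{equation*}
 l'(x) = l(x) - \tilde g(x),\qquad
 k'(x) = k(x) - \tilde g(\sigma_A(x)),\qquad
 b'(x) = b(x) - \tilde g(x),
\end{equation*}
and verify the suspension-triplet axioms: $l'-k' = c + g\circ\sigma_A - g = c'$ lies in $C(X_A,\Z)$ with $[c']=[c]$ an order unit; $l'-b' = l-b$ and $k'-b'\circ\sigma_A = k-b\circ\sigma_A$ are in $C(X_A,\Zp)$; and $l',k'\ge 0$ by the choice of $c_0$.

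Next I would define
$\Phi\colon S_{A,b}^{l,k}\to S_{A,b'}^{l',k'}$ by $\Phi([x,r]) = [x, r-\tilde g(x)]$. The main verifications are that $\Phi$ is well-defined and bijective. For well-definedness it suffices, by Lemma \ref{lem:equiviff}, to check the generating relation: if $r \ge l(x)$, then $r-\tilde g(x) \ge l(x)-\tilde g(x) = l'(x)$, and a direct computation gives
\begin{equation*}
 (r-\tilde g(x)) - c'(x) = r - c(x) - \tilde g(\sigma_A(x)),
\end{equation*}
so the image of the relation $(x,r)\underset{l,k}{\sim}(\sigma_A(x),r-c(x))$ is precisely the generating relation $\underset{l',k'}{\sim}$ applied at $(x, r-\tilde g(x))$. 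The symmetric shift $[y,s]\mapsto[y,s+\tilde g(y)]$ provides a two-sided inverse, and continuity of both maps follows at once from the continuity of the lifts on $X_{A,b}^{\R}$ together with the universal property of the quotient topology. The flow-equivariance $\Phi\circ\phi_{A,t}=\phi_{A,t}\circ\Phi$ and $\Phi\circ b_A = b'_A$ are then immediate from the definition of $\Phi$.

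The main obstacle I anticipate is not well-definedness, which is essentially forced by the coboundary identity, but rather arranging the positivity conditions $l',k'\ge 0$ intrinsic to the triplet definition. The key observation that dissolves this obstacle is that $g$ is only determined by $c-c'$ up to a constant integer, so one has the freedom to shift $g$ by any $c_0\in\Z$; compactness of $X_A$ then makes the necessary $c_0$ exist. After this normalization, the entire argument reduces to a routine check on representatives.
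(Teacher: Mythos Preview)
Your proof is correct and follows essentially the same route as the paper: the paper writes $c-c'=d\circ\sigma_A-d$, normalizes so that $d\ge 0$, sets $l'=l+d$, $k'=k+d\circ\sigma_A$, $b'=b+d$, and defines $\Phi([x,r])=[x,r+d(x)]$; your $\tilde g$ plays the role of $-d$, so the constructions and verifications coincide up to a sign convention. Your treatment is in fact slightly more explicit than the paper's in justifying the normalization via compactness and in exhibiting the inverse map and continuity via the quotient topology.
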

\begin{proof}
Since $[c] = [c']$ in $H^A$,
there exists $d \in C(X_A, \Z)$ such that 
$c - c' = d\circ\sigma_A - d$.
We may assume that 
$d(x) \in \Zp$ for all $x \in X_A$.
Define
\begin{equation*}
l'(x) = l(x) +d(x), \qquad 
k'(x) = k(x) +d(\sigma_A(x)), \qquad  
b'(x) = b(x) +d(x), \qquad x \in X_A. 
\end{equation*}
It is easy to see that
$c' = l' -k'$
and
$(l',k',b')$ is a suspension triplet for $(X_A,\sigma_A)$.
Define
$\Phi:X_{A,b}^{\R} \rightarrow X_{A,b'}^{\R}$
by
$\Phi((x,r)) = (x,r+d(x))$.
We know that 
$(x,r) \in X_{A,b}^{\R}$
if and only if
$(x, r+d(x))\in X_{A,b'}^{\R}$.
Since
$\Phi((\sigma_A(x),r-c(x))) 
= (\sigma_A(x),r-c(x) + d(\sigma_A(x)))
= (\sigma_A(x),r +d(x) -c'(x)),
$
we have
$\Phi((x,r)) \underset{l',k'}{\sim}
\Phi((\sigma_A(x),r-c(x)))$
for $r \ge l'(x)$. 
It is easy to see that 
$\Phi$ extends to a homeomorphism
$S_{A,b}^{l,k} \rightarrow S_{A,b'}^{l',k'}
$
which is still denoted by $\Phi$
satisfying the equalities \eqref{eq:2.6}.
\end{proof}

\section{One-sided flow equivalence}
We will define one-sided flow equivalence 
on one-sided topological Markov shifts.
\begin{definition}\label{defn:onesidedfe}
$(X_A,\sigma_A)$ and 
$(X_B,\sigma_B)$ are said to be {\it one-sided flow equivalent}\/
if there exist suspension triplets $(l_1,k_1,b_1)$ for $(X_A,\sigma_A)$
and $(l_2,k_2,b_2)$ for  $(X_B,\sigma_B)$,
a homeomorphism
$h:X_A \rightarrow X_B$,
and continuous maps
$\Phi_1:S_{A,b_1}^{l_1,k_1}\rightarrow S_{B}^{1}$,
$\Phi_2:S_{B,b_2}^{l_2,k_2}\rightarrow S_{A}^{1}$
such that
\begin{alignat}{3}
\Phi_1\circ \phi_{A,t} & = \phi_{B,t}\circ \Phi_1
\quad\text{for } t \in \R_+, \qquad
\Phi_1\circ b_{1,A}&=& s_B \circ h, \label{eq:crosssectionB}
\\
\Phi_2\circ \phi_{B,t} & = \phi_{A,t} \circ \Phi_2
\quad\text{for } t \in \R_+,
\qquad
\Phi_2\circ b_{2,B}&=& s_A \circ h^{-1},\label{eq:crosssectionA}
\end{alignat}
where 
$b_{1,A}:X_A\rightarrow S_{A,b_1}^{l_1,k_1}$
and
$b_{2,B}:X_B\rightarrow S_{B,b_2}^{l_2,k_2}$
are the base maps defined by
$b_{1,A}(x) = [x,b_1(x)]$ for $x \in X_A$ 
and 
$b_{2,B}(y) = [y,b_2(y)]$ for $y \in X_B$,
respectively. 
\end{definition}
In this case, we say that 
$(X_A,\sigma_A)$ and 
$(X_B,\sigma_B)$ are  one-sided flow equivalent
via a homeomorphism $h:X_A\rightarrow X_B$.
If there exists a homeomorphism $h:X_A\rightarrow X_A$
satisfying \eqref{eq:crosssectionB} (resp. \eqref{eq:crosssectionA}),
we say that $(X_B,\sigma_B)$ (resp. $(X_A,\sigma_A)$)
is a {\it cross section}\/ of the one-sided suspension
$S_{A,b_1}^{l_1,k_1}$ through $b_{1,A}\circ h^{-1}$
(resp.
$S_{B,b_2}^{l_2,k_2}$ through $b_{2,B}\circ h $).
\begin{proposition}\label{prop:3.2}
If 
$(X_A,\sigma_A)$ and 
$(X_B,\sigma_B)$ are continuously orbit equivalent,
then they are one-sided flow equivalent.
\end{proposition}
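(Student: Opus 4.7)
My plan is to build the required one-sided flow equivalence directly from the continuous orbit equivalence data $(h, l_1, k_1, l_2, k_2)$. I would take the base maps to be trivial ($b_1\equiv 0$ on $X_A$ and $b_2\equiv 0$ on $X_B$), use $(l_1, k_1, 0)$ as the suspension triplet for $(X_A, \sigma_A)$ and $(l_2, k_2, 0)$ for $(X_B, \sigma_B)$, and define the connecting maps by the tautological formulas
\[
\Phi_1\colon S_{A,0}^{l_1, k_1}\to S_B^1, \quad [x, r]\mapsto [h(x), r], \qquad
\Phi_2\colon S_{B,0}^{l_2, k_2}\to S_A^1, \quad [y, r]\mapsto [h^{-1}(y), r].
\]

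The first task is to verify that $(l_1, k_1, 0)$ really is a suspension triplet. Condition~(2) of Section~2 is immediate because $l_1, k_1 \in C(X_A, \Zp)$. For condition~(1), put $c_1 := l_1 - k_1$. Substituting the constant function $1_B \in C(X_B, \Z)$ into \eqref{eq:Psihfx} gives $\Psi_h(1_B) = c_1$. Since $1_B$ is strictly positive, $[1_B]$ is an order unit in $H^B_+$, and by \cite{MMETDS} the homomorphism $\Psi_h$ descends to an isomorphism $(H^B, H^B_+) \to (H^A, H^A_+)$ of ordered cohomology groups; hence $[c_1] \in H^A_+$ is an order unit. The symmetric argument, using $\Psi_{h^{-1}}$ and $1_A$, handles $(l_2, k_2, 0)$ on $X_B$.

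Next I would show that $\Phi_1$ is well-defined and continuous. The underlying map $X_{A,0}^\R \to S_B^1$, $(x, r) \mapsto [h(x), r]$, is clearly continuous, so by the universal property of the quotient topology it suffices to verify that the generating identification $(x, r) \underset{l_1, k_1}{\sim} (\sigma_A(x), r - c_1(x))$ (valid for $r \ge l_1(x)$) is sent to an equality in $S_B^1$. Since $l_1(x) \in \Zp$ and $r \ge l_1(x)$, the ceiling-$1$ structure of $S_B^1$ gives $[h(x), r] = [\sigma_B^{l_1(x)}(h(x)), r - l_1(x)]$; by the COE identity \eqref{eq:orbiteqx} this equals $[\sigma_B^{k_1(x)}(h(\sigma_A(x))), r - l_1(x)]$, and sliding back by $k_1(x)$ (legitimate since $r - l_1(x) \ge 0$) produces $[h(\sigma_A(x)), r - c_1(x)]$, as required.

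Flow equivariance and the base-map condition~\eqref{eq:crosssectionB} then come for free from the defining formula of $\Phi_1$: $\Phi_1(\phi_{A,t}[x, r]) = [h(x), r + t] = \phi_{B, t}(\Phi_1[x, r])$, and $\Phi_1(b_{1, A}(x)) = \Phi_1[x, 0] = [h(x), 0] = s_B(h(x))$. The construction of $\Phi_2$ is entirely symmetric. The only delicate step is the order-unit verification, where I rely on \cite{MMETDS} to transfer the (trivial) order-unit property of $[1_B]$ across the COE-induced cohomology isomorphism; once that is granted, the rest is essentially forced by viewing the defining identity of COE inside the ceiling-$1$ suspension on the target side.
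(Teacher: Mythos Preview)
Your proposal is correct and follows essentially the same approach as the paper: you use the identical suspension triplets $(l_1,k_1,0)$ and $(l_2,k_2,0)$, define $\Phi_1,\Phi_2$ by the same tautological formulas $[x,r]\mapsto[h(x),r]$ and $[y,r]\mapsto[h^{-1}(y),r]$, justify the order-unit condition via $\Psi_h(1_B)=c_1$ and the ordered-cohomology isomorphism from \cite{MMETDS}, and verify well-definedness by the same computation in $S_B^1$ (the paper runs both sides forward to the common point $(\sigma_B^{l_1(x)}(h(x)),r-l_1(x))=(\sigma_B^{k_1(x)}(h(\sigma_A(x))),r-c_1(x)-k_1(x))$, whereas you chain through that point and ``slide back'', which is the same identity read in a different order).
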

\begin{proof}
Let $h:X_A\rightarrow X_B$
be a homeomorphism which gives rise to a continuous orbit equivalence
between
$(X_A,\sigma_A)$ and 
$(X_B,\sigma_B)$ with 
continuous functions 
$k_1,l_1: X_A\rightarrow\Zp$ 
and 
$k_2,l_2:X_B\rightarrow \Zp$
satisfying \eqref{eq:orbiteqx}
and \eqref{eq:orbiteqy}, respectively.
Put
$c_1(x) = l_1(x) -k_1(x), x \in X_A$
and
$c_2(y) = l_2(y) -k_2(y), y \in X_B$.
We set $b_1\equiv 0$, $b_2\equiv 0$.
By \cite[Theorem 5.11]{MMETDS},
the map
$\Psi_h: C(X_B,\Z) \rightarrow C(X_A,\Z)$
defined by \eqref{eq:Psihfx}
induces an isomorphism of ordered
groups between
$(H^B, H^B_+)$ and $(H^A, H^A_+)$,
so that the elements $\Psi_h(1)= c_1$ and similarly
$\Psi_{h^{-1}}(1) =c_2$
give rise to an order unit of $(H^A, H^A_+)$
and
of $(H^B, H^B_+)$, respectively.
Then 
$(l_1,k_1,b_1)$ is a suspension triplet for $(X_A,\sigma_A)$
and
$(l_2,k_2,b_2)$ is a suspension triplet for $(X_B,\sigma_B)$.
Define 
$\Phi_1:X_{A,0}^{\R}\rightarrow X_{B,0}^{\R}$
by
$\Phi_1((x,r)) = (h(x),r)$ for $x \in X_A, r\ge 0$.
As 
$S_{B}^{1}$
is the standard suspension,
we have for $(x,r) \in X_{A,0}^{\R}$ with $r \ge l_1(x)$
\begin{equation*}
(h(x),r) \underset{1,0}{\sim}(\sigma_B^{l_1(x)}(h(x)), r-l_1(x))
\end{equation*}
and
\begin{equation*}
(h(\sigma_A(x)),r-c_1(x)) 
\underset{1,0}{\sim}
(\sigma_B^{k_1(x)}(h(\sigma_A(x))), r-c_1(x)-k_1(x)).
\end{equation*}
Since
$\sigma_B^{l_1(x)}(h(x))=\sigma_B^{k_1(x)}(h(\sigma_A(x)))$
and
$ r-l_1(x) = r-c_1(x)-k_1(x)$,
we have
\begin{equation*}
(h(x),r) \underset{1,0}{\sim} (h(\sigma_A(x)),r-c_1(x))
\quad \text{ in } S_{B}^{1}
\end{equation*}
so that the map
$\Phi_1:X_{A,0}^{\R}\rightarrow X_{B,0}^{\R}$
induces a continuous map
$S_{A}^{l_1,k_1}\rightarrow S_{B}^{1}$
which is still denoted by 
$\Phi_1$.
It is clear to see that
the equalities
$\Phi_1\circ \phi_{A,t} = \phi_{B,t}\circ \Phi_1$
for $ t \in \R_+$
and
$\Phi_1\circ b_{1,A} = s_B\circ h$
hold.
We similarly have a continuous map
$\Phi_2:S_{B}^{l_2,k_2}\rightarrow S_{A}^{1}$
defined by
$\Phi_2([y,s]) = [h^{-1}(y),s]$ satisfying
the equalities
$\Phi_2\circ \phi_{B,t} = \phi_{A,t}\circ \Phi_2$
for $ t \in \R_+$
and
$\Phi_2\circ b_{2,B} = s_A\circ h^{-1}$
to prove  that 
$(X_A,\sigma_A)$ and 
$(X_B,\sigma_B)$ are one-sided flow equivalent.
\end{proof}
Conversely we have
\begin{proposition}
If $(X_A,\sigma_A)$ and 
$(X_B,\sigma_B)$ are one-sided flow equivalent,
then they are continuously orbit equivalent.
\end{proposition}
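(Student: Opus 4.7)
The plan is to extract continuous orbit equivalence functions directly from the flow-equivariant data. Let $h:X_A\rightarrow X_B$ be the homeomorphism witnessing the one-sided flow equivalence, with suspension triplets $(l_1,k_1,b_1)$ for $(X_A,\sigma_A)$, $(l_2,k_2,b_2)$ for $(X_B,\sigma_B)$, and flow-equivariant continuous maps $\Phi_1,\Phi_2$ as in Definition \ref{defn:onesidedfe}. First I would set
\[
L_1(x) := l_1(x) - b_1(x), \qquad K_1(x) := k_1(x) - b_1(\sigma_A(x)), \qquad x \in X_A,
\]
and define $L_2,K_2 \in C(X_B,\Zp)$ analogously. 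Condition (2) of the definition of a suspension triplet guarantees that $L_1,K_1,L_2,K_2$ are all $\Zp$-valued continuous functions; these will serve as the candidate orbit-equivalence functions.

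The key calculation takes place inside $S_{A,b_1}^{l_1,k_1}$. Flowing the base point $b_{1,A}(x)=[x,b_1(x)]$ forward by time $L_1(x)$ yields $[x,l_1(x)]$, which by the defining equivalence equals $[\sigma_A(x),k_1(x)]$ (since $c_1 = l_1-k_1$), and the latter is in turn $\phi_{A,K_1(x)}(b_{1,A}(\sigma_A(x)))$. Hence
\[
\phi_{A,L_1(x)}(b_{1,A}(x)) = \phi_{A,K_1(x)}(b_{1,A}(\sigma_A(x))) \quad \text{in } S_{A,b_1}^{l_1,k_1}.
\]
I would then apply $\Phi_1$ and use both the flow-intertwining $\Phi_1\circ\phi_{A,t} = \phi_{B,t}\circ\Phi_1$ and the base-map identity $\Phi_1\circ b_{1,A} = s_B\circ h$ from \eqref{eq:crosssectionB} to transport the identity to the standard suspension:
\[
\phi_{B,L_1(x)}(s_B(h(x))) = \phi_{B,K_1(x)}(s_B(h(\sigma_A(x)))) \quad \text{in } S_B^1.
\]

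Next I would read this identity inside $S_B^1$. For every $n \in \Zp$ and $y \in X_B$, iterating the defining equivalence gives $\phi_{B,n}(s_B(y)) = [y,n] = [\sigma_B^n(y),0] = s_B(\sigma_B^n(y))$, so the displayed identity collapses to $s_B(\sigma_B^{L_1(x)}(h(x))) = s_B(\sigma_B^{K_1(x)}(h(\sigma_A(x))))$. Invoking the injectivity of the standard base map $s_B$ proved earlier in the paper then yields
\[
\sigma_B^{L_1(x)}(h(x)) = \sigma_B^{K_1(x)}(h(\sigma_A(x))), \qquad x \in X_A,
\]
which is precisely \eqref{eq:orbiteqx} with $L_1,K_1$ playing the roles of $l_1,k_1$. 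The mirror argument using $\Phi_2$, $h^{-1}$, and the triplet $(l_2,k_2,b_2)$ produces \eqref{eq:orbiteqy}, so $h$ is a continuous orbit equivalence between $(X_A,\sigma_A)$ and $(X_B,\sigma_B)$.

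The only real obstacle is bookkeeping: one has to confirm that $L_1,K_1$ are honestly $\Zp$-valued (immediate from suspension-triplet condition (2)) and that integer flow-times in the standard suspension correspond exactly to iterates of $\sigma_B$, so that the identity descends through the injective base map $s_B$. Both checks are forced by the definitions, so the essence of the proof is simply to apply $\Phi_1$ (and $\Phi_2$) to the natural flow-return identity at the base and read off the orbit equivalence from the standard-suspension side.
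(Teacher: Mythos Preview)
Your proposal is correct and follows essentially the same route as the paper's proof: define $L_1=l_1-b_1$, $K_1=k_1-b_1\circ\sigma_A$ (the paper calls these $l_1',k_1'$), use the suspension relation at the base to obtain $\phi_{A,L_1(x)}(b_{1,A}(x))=\phi_{A,K_1(x)}(b_{1,A}(\sigma_A(x)))$, push this through $\Phi_1$ to $S_B^1$, and then use injectivity of $s_B$ together with the fact that integer flow times in the standard suspension are shift iterates. The only difference is cosmetic notation and the order in which the bookkeeping is presented.
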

\begin{proof}
Suppose that 
$(X_A,\sigma_A)$ and 
$(X_B,\sigma_B)$ are one-sided flow equivalent.
Take suspension triplets $(l_1,k_1,b_1)$ for $(X_A,\sigma_A)$
and $(l_2,k_2,b_2)$ for $(X_B,\sigma_B)$,
a homeomorphism
$h:X_A \rightarrow X_B$,
and continuous maps
$\Phi_1:S_{A,b_1}^{l_1,k_1}\rightarrow S_{B}^{1}$,
$\Phi_2:S_{B,b_2}^{l_2,k_2}\rightarrow S_{A}^{1}$
satisfying
the equalities
\eqref{eq:crosssectionB} and \eqref{eq:crosssectionA}.
For 
$(x,r)\in X_{A,b_1}^{\R}$
with $r \ge l_1(x)$,
we have
$[x,r] = [\sigma_A(x),r-c_1(x)]$ in $S_{A,b_1}^{l_1,k_1}$.
It follows that
\begin{align*}
[x,l_1(x)] & =[x,(l_1(x)-b_1(x)) + b_1(x)], \\ 
[\sigma_A(x),l_1(x)-c_1(x) ]
& = [\sigma_A(x),(k_1(x) -b_1(\sigma_A(x))) + b_1(\sigma_A(x))].
\end{align*} 
As $l_1(x)-b_1(x) \ge 0$ and $k_1(x) -b_1(\sigma_A(x)) \ge 0$,
we have
\begin{align*}
[x,l_1(x)] 
& = \phi_{A,l_1(x)-b_1(x)}([x,b_1(x)])
  = \phi_{A,l_1(x)-b_1(x)}(b_{1,A}(x)), \\ 
[\sigma_A(x),l_1(x)-c_1(x) ]
& = \phi_{A,k_1(x)-b_1(\sigma_A(x))}([\sigma_A(x),b_1(\sigma_A(x))]) \\
& = \phi_{A,k_1(x)-b_1(\sigma_A(x))}(b_{1,A}(\sigma_A(x))).
\end{align*} 
Hence we have
$
 \phi_{A,l_1(x)-b_1(x)}(b_{1,A}(x))
= \phi_{A,k_1(x)-b_1(\sigma_A(x))}(b_{1,A}(\sigma_A(x)))
$ 
so that
\begin{equation*}
\Phi_1( \phi_{A,l_1(x)-b_1(x)}(b_{1,A}(x)))
=\Phi_1( \phi_{A,k_1(x)-b_1(\sigma_A(x))}(b_{1,A}(\sigma_A(x)))).
\end{equation*} 
By \eqref{eq:crosssectionB} and \eqref{eq:crosssectionA},
we have
\begin{equation*}
  \phi_{B,l_1(x)-b_1(x)}(s_B(h(x)))
= \phi_{B,k_1(x)-b_1(\sigma_A(x))}(s_B(h(\sigma_A(x)))).
\end{equation*} 
It follows that
\begin{equation*}
  [h(x),l_1(x)-b_1(x)]
= [h(\sigma_A(x)),k_1(x)-b_1(\sigma_A(x))] \quad 
\text{ in } S_{B}^{1}.
\end{equation*}
Put
$l_1'(x) =l_1(x)-b_1(x)$
and
$k_1'(x) = k_1(x)-b_1(\sigma_A(x))$.
They are valued in nonnegative integers.
Since
\begin{align*}
  [h(x),l_1(x)-b_1(x)]
& =[\sigma_B^{l_1'(x)}(h(x)),0]= s_B(\sigma_B^{l_1'(x)}(h(x))),\\
  [h(\sigma_A(x)),k_1(x)-b_1(\sigma_A(x))]
& =[\sigma_B^{k_1'(x)}(h(\sigma_A(x))),0]= s_B(\sigma_B^{k_1'(x)}(h(\sigma_A(x)))),
\end{align*}
and 
the standard base map $s_B:X_B \rightarrow S_{B}^{1}$
is injective,
we have
$\sigma_B^{l_1'(x)}(h(x))=\sigma_B^{k_1'(x)}(h(\sigma_A(x)))$
for $x \in X_A$.
We similarly have  continuous maps
$l_2', k_2' \in C(X_B,\Zp)$
such that 
$\sigma_A^{l_2'(y)}(h^{-1}(y))=\sigma_A^{k_2'(y)}(h^{-1}(\sigma_B(y)))$
for $y \in X_B$.
Consequently 
$(X_A,\sigma_A)$ and 
$(X_B,\sigma_B)$ are continuously orbit equivalent.
\end{proof}
Therefore we may conclude the following theorem.
\begin{theorem}\label{thm:fecoe}
One-sided topological Markov shifts 
$(X_A, \sigma_A)$ and $(X_B,\sigma_B)$ 
are one-sided flow equivalent
if and only if they 
are continuously orbit equivalent.
\end{theorem}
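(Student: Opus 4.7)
The plan is to prove the theorem as the conjunction of the two implications, which I would package as the two propositions immediately preceding: continuous orbit equivalence $\Rightarrow$ one-sided flow equivalence and the converse. Since the statement is a biconditional, I would organize the proof so that both directions use the same homeomorphism $h:X_A\to X_B$ and take the base functions $b_1,b_2$ to be identically zero; the content of the theorem is transporting integer cocycles between the two equivalence notions.

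For the ``continuously orbit equivalent implies one-sided flow equivalent'' direction, given $h:X_A\to X_B$ with cocycles $k_1,l_1,k_2,l_2$ satisfying \eqref{eq:orbiteqx} and \eqref{eq:orbiteqy}, I would put $c_1=l_1-k_1=\Psi_h(1)$ and $c_2=l_2-k_2=\Psi_{h^{-1}}(1)$. By the isomorphism of ordered cohomology groups induced by $\Psi_h$ (from \cite{MMETDS}), the classes $[c_1]\in H^A_+$ and $[c_2]\in H^B_+$ are order units, so $(l_1,k_1,0)$ and $(l_2,k_2,0)$ are suspension triplets. I would then define $\Phi_1:X_{A,0}^{\R}\to X_{B,0}^{\R}$ by $\Phi_1(x,r)=(h(x),r)$ and check that $\Phi_1$ descends to a continuous map $S_A^{l_1,k_1}\to S_B^1$; the descent is precisely where \eqref{eq:orbiteqx} is used, since it makes the two chains $(h(x),r)\sim(\sigma_B^{l_1(x)}(h(x)),r-l_1(x))$ and $(h(\sigma_A(x)),r-c_1(x))\sim(\sigma_B^{k_1(x)}(h(\sigma_A(x))),r-c_1(x)-k_1(x))$ land at the same point in $S_B^1$. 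The flow equivariance and the cross-section relation $\Phi_1\circ b_{1,A}=s_B\circ h$ are then immediate. The map $\Phi_2$ is built symmetrically from $h^{-1}$ using \eqref{eq:orbiteqy}.

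For the reverse direction, given suspension triplets $(l_i,k_i,b_i)$ and maps $\Phi_i$ satisfying \eqref{eq:crosssectionB} and \eqref{eq:crosssectionA}, I would extract cocycles by examining what happens on the ceiling of the suspension. In $S_{A,b_1}^{l_1,k_1}$ we have $[x,l_1(x)]=[\sigma_A(x),l_1(x)-c_1(x)]=[\sigma_A(x),k_1(x)]$. Writing each side as a flow image of the base map, $[x,l_1(x)]=\phi_{A,l_1(x)-b_1(x)}(b_{1,A}(x))$ and $[\sigma_A(x),k_1(x)]=\phi_{A,k_1(x)-b_1(\sigma_A(x))}(b_{1,A}(\sigma_A(x)))$, and applying $\Phi_1$ together with \eqref{eq:crosssectionB} converts this into an equality of points in the standard suspension $S_B^1$ sitting over $h(x)$ and $h(\sigma_A(x))$. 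Setting $l_1'=l_1-b_1$ and $k_1'=k_1-b_1\circ\sigma_A$, the identity $[y,n]=s_B(\sigma_B^n(y))$ valid in the standard suspension for $n\in\Zp$ together with injectivity of $s_B$ (proved earlier) gives $\sigma_B^{l_1'(x)}(h(x))=\sigma_B^{k_1'(x)}(h(\sigma_A(x)))$; the symmetric argument from $\Phi_2$ yields the other cocycle equation.

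The main obstacle is the reverse direction, and specifically the point where one must conclude that $l_1', k_1'$ take values in $\Zp$. This is exactly what forces the target of $\Phi_1$ to be the \emph{standard} suspension rather than an arbitrary one: it is the integrality of the ceiling in $S_B^1$, combined with condition (2) in the definition of suspension triplet (which ensures $l_1-b_1$ and $k_1-b_1\circ\sigma_A$ are nonnegative integer valued), that allows injectivity of $s_B$ to extract discrete cocycle data from what is a priori a continuous-time identification. Everything else is careful bookkeeping with the flow equivalence relation set up in Section 2.
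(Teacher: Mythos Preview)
Your proposal is correct and follows essentially the same route as the paper: the forward direction is exactly Proposition~\ref{prop:3.2} (with $b_1=b_2\equiv 0$, $\Phi_1(x,r)=(h(x),r)$, and the order-unit property of $c_i=\Psi_{h^{\pm1}}(1)$ coming from \cite{MMETDS}), and the reverse direction is exactly the preceding proposition, writing $[x,l_1(x)]=[\sigma_A(x),k_1(x)]$ as flow images of the base map, pushing through $\Phi_1$, and using injectivity of $s_B$ on the standard suspension. One small sharpening: the nonnegative-integer valuedness of $l_1'=l_1-b_1$ and $k_1'=k_1-b_1\circ\sigma_A$ is entirely a consequence of condition~(2) in the definition of suspension triplet; the role of the \emph{standard} target $S_B^1$ is only to supply the identity $[y,n]=s_B(\sigma_B^n(y))$ and the injectivity of $s_B$ (Lemma~2.5).
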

It is well-known that 
two-sided topological Markov shifts 
$(\bar{X}_A,\bar{\sigma}_A)$ 
and 
$(\bar{X}_B,\bar{\sigma}_B)$
are flow equivalent if and only if 
$\Z^N/(\id - A){\Z}^N$ is isomorphic to
$\Z^M/(\id - B){\Z}^M$
as abelian groups
and
$\det(\id - A) =\det(\id - B)$,
where $N, M$ are the sizes of the matrices $A, B$
respectively
(\cite{BF}, \cite{Franks}, \cite{PS}).
By using \cite[Theorem 3.6]{MMETDS},
we see the following characterization of one-sided flow equivalence
which is a corollary of the above theorem.
\begin{corollary}\label{cor:flmatrix}  
One-sided topological Markov shifts
$(X_A, \sigma_A)$ and $(X_B,\sigma_B)$ 
are one-sided flow equivalent
if and only if
there exists an isomorphism
$\varPhi: \Z^N/(\id - A){\Z}^N 
\rightarrow 
\Z^M/(\id - B){\Z}^M$
of abelian groups
such that 
$\varPhi([u_A]) = [u_B]$
and
$\det(\id - A) =\det(\id - B)$,
where
$[u_A]$ (resp. $[u_B]$)  
is the class of the vector 
$u_A =[1,\dots,1]$ in $\Z^N/(\id - A){\Z}^N$
(resp. 
$u_B =[1,\dots,1]$ in $\Z^M/(\id - B){\Z}^M$).
\end{corollary}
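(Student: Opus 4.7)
The plan is to deduce this corollary by combining Theorem \ref{thm:fecoe}, which has just been established, with the known matrix-theoretic characterization of continuous orbit equivalence from \cite[Theorem 3.6]{MMETDS}. Since the corollary is phrased as a logical equivalence, I would prove each direction by passing through continuous orbit equivalence as an intermediate condition.

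First I would observe that by Theorem \ref{thm:fecoe}, the hypothesis that $(X_A,\sigma_A)$ and $(X_B,\sigma_B)$ are one-sided flow equivalent is equivalent to the assertion that they are continuously orbit equivalent. Thus the corollary is reduced to showing that continuous orbit equivalence of $(X_A,\sigma_A)$ and $(X_B,\sigma_B)$ is equivalent to the existence of an isomorphism $\varPhi:\Z^N/(\id-A)\Z^N\to\Z^M/(\id-B)\Z^M$ sending the class $[u_A]$ of the all-ones vector to $[u_B]$, together with $\det(\id-A)=\det(\id-B)$. This is exactly the content of \cite[Theorem 3.6]{MMETDS}, which identifies $\Z^N/(\id-A)\Z^N$ with the ordered cohomology group $H^A$ under an isomorphism carrying $[u_A]$ to the class of the constant function $1$, whose status as an order unit is precisely what is preserved under continuous orbit equivalence (together with the Bowen--Franks determinant invariant).

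The forward direction proceeds by taking a continuous orbit equivalence $h$ and applying \cite[Theorem 3.6]{MMETDS} to obtain the required isomorphism $\varPhi$; under the identification of $\Z^N/(\id-A)\Z^N$ with $H^A$, the class $[u_A]$ corresponds to $[1]\in H^A$, and the map $\Psi_h:C(X_B,\Z)\to C(X_A,\Z)$ of \eqref{eq:Psihfx} carries $[1]$ to an order unit as used in Proposition \ref{prop:3.2}. The reverse direction uses the fact that a matrix-level isomorphism satisfying these conditions is realized by a continuous orbit equivalence, again as in \cite[Theorem 3.6]{MMETDS}, and then by Proposition \ref{prop:3.2} yields one-sided flow equivalence.

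Since the heavy lifting is entirely contained in Theorem \ref{thm:fecoe} and the cited \cite[Theorem 3.6]{MMETDS}, there is no genuine obstacle to overcome here; the only point requiring a small amount of care is the bookkeeping that matches the unit class $[u_A]$ on the matrix side with the order unit class $[1]\in H^A_+$ on the cohomology side. Once this identification is stated explicitly, the equivalence in the corollary follows immediately by concatenating the two equivalences.
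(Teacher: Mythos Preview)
Your proposal is correct and follows the same approach as the paper: the corollary is obtained directly by combining Theorem~\ref{thm:fecoe} with \cite[Theorem~3.6]{MMETDS}, and the paper gives no further argument beyond this citation. Your additional remarks about the identification of $[u_A]$ with $[1]\in H^A_+$ and the appeal to Proposition~\ref{prop:3.2} are harmless elaborations, though the latter is redundant since Theorem~\ref{thm:fecoe} already supplies both directions of the equivalence with continuous orbit equivalence.
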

We note that  
the statement of the following proposition 
is more general than that of Proposition \ref{prop:3.2}.
\begin{proposition}
Suppose that $(X_A,\sigma_A)$ and $(X_B,\sigma_B)$
are continuously orbit equivalent via a homeomorphism 
$h:X_A\rightarrow X_B$.
For 
$f \in C(X_B,\Zp)$ and 
$g \in C(X_A,\Zp)$ such that  
$[f] \in H_+^B$ and  $[g] \in H_+^A$ 
are order units 
of $(H^B,H^B_+)$ 
and 
of $(H^A,H^A_+)$
respectively,
there exist
$l_f, k_f \in C(X_A,\Zp)$ 
and
$l_g, k_g \in C(X_B,\Zp)$ such that 
$(l_f, k_f, 0)$ and 
$(l_g, k_g, 0)$ are suspension triplets for
$(X_A, \sigma_A)$ and for $(X_B,\sigma_B)$ 
respectively
and
continuous maps
$\Phi_f:S_{A}^{l_f,k_f}\rightarrow S_B^f$
and
$\Phi_g:S_{B}^{l_g,k_g}\rightarrow S_A^g$
such that
\begin{alignat*}{3}
\Phi_f\circ \phi_{A,t} & = \phi_{B,t}\circ \Phi_f
\quad\text{for } t \in \R_+,
\qquad
\Phi_f\circ s_A & = & s_B \circ h, 
\\
\Phi_g\circ \phi_{B,t} & = \phi_{A,t}\circ \Phi_g
\quad\text{for } t \in \R_+,
\qquad
\Phi_g\circ s_B & = & s_A \circ h^{-1}.
\end{alignat*}
\end{proposition}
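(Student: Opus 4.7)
The plan is to build on the construction in Proposition \ref{prop:3.2} by transporting $f$ along the orbit equivalence to produce explicit nonnegative integer-valued functions $l_f, k_f$ that serve as ceiling and floor. Concretely, I would define
$$l_f(x) = \sum_{i=0}^{l_1(x)-1} f(\sigma_B^i(h(x))), \qquad k_f(x) = \sum_{j=0}^{k_1(x)-1} f(\sigma_B^j(h(\sigma_A(x))))$$
for $x \in X_A$, where $k_1, l_1 \in C(X_A,\Zp)$ come from the continuous orbit equivalence. Since $f \ge 0$ and $l_1, k_1$ are nonnegative integer valued, both $l_f, k_f$ lie in $C(X_A,\Zp)$. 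Their difference equals $\Psi_h(f)$ by \eqref{eq:Psihfx}, and since $\Psi_h$ induces an isomorphism of ordered cohomology groups by \cite[Theorem 5.11]{MMETDS}, the class $[l_f - k_f] = [\Psi_h(f)]$ is an order unit of $(H^A, H^A_+)$. Thus $(l_f, k_f, 0)$ is a suspension triplet for $(X_A,\sigma_A)$. Analogously, $l_g(y) = \sum_{i=0}^{l_2(y)-1} g(\sigma_A^i(h^{-1}(y)))$ and $k_g(y) = \sum_{j=0}^{k_2(y)-1} g(\sigma_A^j(h^{-1}(\sigma_B(y))))$ produce a suspension triplet $(l_g, k_g, 0)$ for $(X_B,\sigma_B)$.

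Next I would define $\Phi_f : X_{A,0}^{\R} \to X_{B,0}^{\R}$ by $(x,r) \mapsto (h(x), r)$ and show it descends to a continuous map $S_A^{l_f,k_f} \to S_B^f$, from which continuity on the quotient follows automatically. The essential step is well-definedness across the identifications: for $(x,r)$ with $r \ge l_f(x)$ one must verify
$$(h(x), r) \underset{f,0}{\sim} (h(\sigma_A(x)), r - (l_f(x) - k_f(x))) \quad \text{in } S_B^f.$$
Since $r \ge l_f(x) = \sum_{i=0}^{l_1(x)-1} f(\sigma_B^i(h(x)))$, iterating the defining relation of $S_B^f$ yields
$$(h(x), r) \underset{f,0}{\sim} (\sigma_B^{l_1(x)}(h(x)), r - l_f(x)).$$
On the other side, $r - l_f(x) + k_f(x) \ge k_f(x) = \sum_{j=0}^{k_1(x)-1} f(\sigma_B^j(h(\sigma_A(x))))$ permits a symmetric iteration of length $k_1(x)$:
$$(h(\sigma_A(x)), r - l_f(x) + k_f(x)) \underset{f,0}{\sim} (\sigma_B^{k_1(x)}(h(\sigma_A(x))), r - l_f(x)).$$
The two terminal points agree by the orbit equivalence identity $\sigma_B^{l_1(x)}(h(x)) = \sigma_B^{k_1(x)}(h(\sigma_A(x)))$ coming from \eqref{eq:orbiteqx}, which closes the argument.

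Finally, the flow intertwining $\Phi_f \circ \phi_{A,t} = \phi_{B,t} \circ \Phi_f$ is immediate since both flows translate the second coordinate, and $\Phi_f \circ s_A = s_B \circ h$ follows from $s_A(x) = [x,0]$ and $\Phi_f([x,0]) = [h(x),0] = s_B(h(x))$. The analogous argument with $h^{-1}$ and $g$ produces $\Phi_g$ with the dual properties. The main obstacle is engineering $l_f, k_f$ so that the single step in the $A$-side relation is matched by exactly the right compound iteration on the $B$-side, of lengths $l_1(x)$ and $k_1(x)$ respectively; once this is verified, everything else is a routine generalization of Proposition \ref{prop:3.2}, which is exactly the special case $f \equiv 1$, $g \equiv 1$.
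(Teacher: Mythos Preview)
Your proof is correct and follows essentially the same approach as the paper: the definitions of $l_f, k_f$ (and $l_g, k_g$) coincide with the paper's $f^{l_1(x)}(h(x))$ and $f^{k_1(x)}(h(\sigma_A(x)))$, the verification that $\Phi_f$ descends to the quotient proceeds by the same two-sided iteration matched via \eqref{eq:orbiteqx}, and the appeal to \cite[Theorem 5.11]{MMETDS} for the order-unit property is identical. The only cosmetic difference is that you spell out the monotonicity reason why the intermediate steps of the iteration are legitimate, which the paper leaves implicit.
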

\begin{proof}
Put
\begin{equation*}
l_f(x)  = f^{l_1(x)}(h(x)),
\qquad
k_f(x)  = f^{k_1(x)}(h(\sigma_A(x))), 
 \qquad x \in X_A
\end{equation*}
so that $\Psi_h(f)(x) = l_f(x) - k_f(x)$.
Since $[f]\in H^B_+$ is an order unit and 
$\Psi_h:H^B\rightarrow H^A$ preserves the orders,
the class
$[\Psi_h(f)]$ gives rise to an order unit of $(H^A,H^A_+)$.
As $l_f - k_f =\Psi_h(f)$,
we see that $(l_f,k_f,0)$ is a suspension triplet for $(X_A,\sigma_A)$
so that we may consider the one-sided suspensions
$(S_A^{l_f,k_f},\phi_A)$ and
$(S_B^f,\phi_B)$.
We define the map
$\Phi_f:X_{A,0}^\R \rightarrow X_{B,0}^\R$
by $\Phi_f((x,r)) = (h(x),r)$.
For $r \ge l_f(x)$, we have 
$$
(h(x),r) 
\underset{f,0}{\sim}(\sigma_B(h(x)), r-f(h(x)))
\underset{f,0}{\sim}\cdots
\underset{f,0}{\sim}(\sigma_B^{l_1(x)}(h(x)), r-f^{l_1(x)}(h(x)))
$$
and
similarly
$$
(h(\sigma_A(x)),r-\Psi_h(f)(x))
\underset{f,0}{\sim}(\sigma_B^{k_1(x)}(h(\sigma_A(x))), 
r-\Psi_h(f)(x)-f^{k_1(x)}(h(\sigma_A(x)))).
$$
As
the equalities 
$\sigma_B^{l_1(x)}(h(x))=\sigma_B^{k_1(x)}(h(\sigma_A(x)))$
and
$f^{l_1(x)}(h(x))= 
\Psi_h(f)(x) + f^{k_1(x)}(h(\sigma_A(x))
$
hold,
we have
$$
\Phi_f((x,r)) 
\underset{f,0}{\sim}
\Phi_f((\sigma_A(x),r-\Psi_h(f)(x)). 
$$
Hence
$\Phi_f:X_{A,0}^\R \rightarrow X_{B,0}^\R$
induces 
a continuous map
$S_A^{l_f,k_f} \rightarrow S_B^f$
which is still denoted by 
$\Phi_f$.
It is easy to see that the map satisfies the desired properties.
We similarly have a desired map
$\Phi_g:S_B^{l_g,k_g} \rightarrow S_A^g$.
\end{proof}

We give an example of one-sided flow equivalent topological Markov shifts.
Let 
$A =
\left[\begin{smallmatrix}
1 & 1\\
1 & 1
\end{smallmatrix}\right],
\,
B =
\left[\begin{smallmatrix}
1 & 1\\
1 & 0
\end{smallmatrix}\right].
$
The one-sided topological Markov shift
$(X_A,\sigma_A)$ is called the full $2$-shift,
and the other one 
$(X_B,\sigma_B)$ is called the golden mean shift
whose shift space $X_B$ consists of
sequences $(y_n)_{n \in \N}$ of $1,2 $ such that the word $(2,2)$ 
is forbidden (cf. \cite{LM}).
They are continuously orbit equivalent as in \cite[Section 5]{MaPacific}   
through the homeomorphism
$h: X_A \longrightarrow X_B$
defined 
by substituting the word $(2,1)$ for the symbol $2$ 
from the leftmost of a sequence $(x_n)_{n\in \N}$
in order,
so that they are one-sided flow equivalent.
Put for $i=1,2$
\begin{equation*} 
U_{A,i} 
=  \{ (x_n)_{n \in \N} \in X_A \mid x_1 =i\},\qquad
U_{B,i}  
 =  
\{ (y_n)_{n \in \N}  \in X_B \mid y_1 =i\}.
\end{equation*}
By setting
\begin{equation*}
\begin{cases}
k_1(x) =0, \, l_1(x) =1 & \text{ for } x \in  U_{A,1},\\
k_1(x) =0, \, l_1(x) =2 & \text{ for } x \in  U_{A,2},
\end{cases}
\qquad
\begin{cases}
k_2(y) =0, \, l_2(y) =1 & \text{ for } y \in  U_{B,1},\\
k_2(y) =1, \, l_2(y) =1 & \text{ for } y \in  U_{B,2},
\end{cases}
\end{equation*}
the continuous functions
$k_1,l_1: X_A\rightarrow\Zp$ 
and 
$k_2,l_2:X_B\rightarrow \Zp$
satisfy \eqref{eq:orbiteqx}
and \eqref{eq:orbiteqy}, respectively.
By Proposition \ref{prop:3.2},
the suspension flows 
$(S_A^{l_1}, \phi_A)$ and $(S_B^1, \phi_B)$,
and similarly
$(S_B^{l_2,k_2}, \phi_B)$ and $(S_A^1, \phi_A)$
give rise to flow equivalence 
between 
$(X_A,\sigma_A)$ 
and
$(X_B,\sigma_B)$.


\section{One-sided suspensions and  zeta functions}
The zeta function $\zeta_{\phi}(s)$
of a flow $\phi_t:S \rightarrow S$ 
on a compact metric space with at most countably many closed orbits 
is defined by the formula \eqref{eqn:zetaflow}.
In the first half of this section, we will show that the zeta function 
$\zeta_{\phi_A}(s)$ of the flow $\phi_A$ of the one-sided suspension 
$S_{A,b}^{l,k}$ of $(X_A,\sigma_A)$ is given by the dynamical zeta function
$\zeta_{A,c}(s)$ with potential function $c = l-k$.
We first provide a lemma. 
\begin{lemma}\label{lem:period}
Let $(S_{A,b}^{l,k},\phi_A)$ be a  one-sided suspension of $(X_A,\sigma_A)$.
Then there exists a bijective correspondence between primitive periodic orbits
$\tau \in P_{orb}(S_{A,b}^{l,k},\phi_A)$ 
and periodic orbits 
$\gamma_\tau \in P_{orb}(X_A)$ of $(X_A,\sigma_A)$ such that
\begin{equation*}
\ell(\tau) = \beta_{\gamma_\tau}(c)
\end{equation*}
where 
$\ell(\tau)$ is the primitive length of the periodic orbit 
$\tau$
defined by
 $\ell(\tau) = \min\{t\in \R_+ \mid \phi_{A,t}(u) = u\}$ 
for any point $u \in \tau$
and
$\beta_{\gamma_\tau}(c)=\sum_{i=0}^{p-1}c(\sigma_A^i(x))$
for  $c = l-k$ and 
$\gamma_\tau =\{x,\sigma_A(x), \dots,\sigma_A^{p-1}(x)\}.
$
\end{lemma}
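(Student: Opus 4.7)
The plan is to construct an explicit bijection $\gamma \longleftrightarrow \tau_\gamma$ between primitive periodic orbits on the two sides and to read off the length formula along the way. The main tools are Lemma~\ref{lem:equiviff}, which encodes every instance of $\underset{l,k}{\sim}$ in finite data, and Lemma~\ref{lem:orderunitpos}, the order-unit property of $c = l-k$. As a preliminary I would record that for every $\sigma_A$-periodic point $y$ of period $p$, the integer $c^p(y)$ is strictly positive: since $c$ is $\Z$-valued and $[c]$ is an order unit, Lemma~\ref{lem:orderunitpos} produces an $N$ with $c^N \ge 1$ pointwise, and $c^{Np}(y) = N\,c^p(y)$ on the periodic orbit forces $c^p(y) \ge 1$.

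For the forward map I would proceed as follows. Given a primitive periodic orbit $\gamma = \{y_0, \sigma_A(y_0), \ldots, \sigma_A^{p-1}(y_0)\}$, pick $r_0 \ge b(y_0)$ large enough that
\[
r_0 + c^p(y_0) - c^m(y_0) \;\ge\; l(\sigma_A^m(y_0)) \qquad \text{for every } 0 \le m < p;
\]
finiteness of the orbit makes this possible. Lemma~\ref{lem:equiviff}, applied with $n = p$ and $n' = 0$, then yields $(y_0, r_0 + c^p(y_0)) \underset{l,k}{\sim} (y_0, r_0)$, so $[y_0, r_0]$ is $\phi_A$-periodic with a period dividing $c^p(y_0)$; I set $\tau_\gamma$ to be its flow orbit. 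To pin down the primitive length, suppose $T > 0$ is any period of $[y_0, r_0]$. Applying Lemma~\ref{lem:equiviff} to $(y_0, r_0 + T) \underset{l,k}{\sim} (y_0, r_0)$ produces $n, n' \ge 0$ with $\sigma_A^n(y_0) = \sigma_A^{n'}(y_0)$ and $T = c^n(y_0) - c^{n'}(y_0)$. Primitivity of $p$ forces $n - n'$ to be a positive multiple of $p$, and a direct calculation on the periodic orbit collapses this to $T = j\,c^p(y_0)$ for some $j \ge 1$. Hence the primitive length of $\tau_\gamma$ is exactly $c^p(y_0) = \beta_\gamma(c)$, which in particular confirms that $\tau_\gamma$ really is primitive.

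For the inverse map, let $\tau$ be a primitive periodic $\phi_A$-orbit of primitive length $T$, pick any $[x, r] \in \tau$, and apply Lemma~\ref{lem:equiviff} to $(x, r + T) \underset{l,k}{\sim} (x, r)$ to obtain $n \ge n' \ge 0$ with $y := \sigma_A^{n'}(x) = \sigma_A^n(x)$; so $y$ is $\sigma_A$-periodic with primitive period $p \mid n - n'$, and let $\gamma$ be the primitive orbit through $y$. The intermediate inequalities supplied by Lemma~\ref{lem:equiviff} allow the reduction $[x, r] = [y, r']$ with $r' := r - c^{n'}(x)$, and the same inequalities reassemble, for the block of indices $m = n' + j$ with $0 \le j < p$, into exactly the hypotheses required to apply Step~1 to $\gamma$ at height $r'$. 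Hence $[y, r'] \in \tau_\gamma$, so $\tau = \tau_\gamma$ and $T = c^p(y) = \beta_\gamma(c)$. Well-definedness is then routine: different heights over the same $y_0$ lie on a common flow orbit by definition of $\phi_A$, different choices of $y_0 \in \gamma$ are related by a bounded forward flow time after enlarging $r_0$, and two $\sigma_A$-periodic orbits are either disjoint or equal, so the ``shadow'' $\gamma$ associated to $\tau$ is independent of the chosen $[x, r]$.

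The main obstacle I anticipate is the repeated bookkeeping of the intermediate inequalities in Lemma~\ref{lem:equiviff}; these are not automatic when $c$ assumes negative values. Fortunately each use involves only finitely many real-linear constraints in a single real variable (the height $r_0$ in Step~1, or the flow time feeding into Step~3 via the $(n,n')$ produced by Lemma~\ref{lem:equiviff}), so enlarging that variable enforces all constraints simultaneously. Absorbing this growth into the choice of $r_0$ at the outset is the only nontrivial care the argument requires.
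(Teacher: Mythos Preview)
Your proposal is correct and follows the same natural correspondence as the paper, but with substantially more care: the paper's proof is a two-paragraph sketch that asserts the bijection and the length formula without verifying primitivity or invoking Lemma~\ref{lem:equiviff} explicitly, whereas you supply those details. One small imprecision: in the inverse step, the block of inequalities at indices $m = n'+j$ yields Step~1's hypotheses at height $r' + T - c^p(y)$ rather than at $r'$ itself when $T > c^p(y)$; but since that higher point lies on $\tau$ by flowing forward, you still obtain $\tau = \tau_\gamma$ and hence $T = c^p(y)$, so the argument goes through.
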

\begin{proof}
For an arbitrary point $(x,r)$ in 
a primitive periodic orbit
$\tau \in P_{orb}(S_{A,b}^{l,k},\phi_A)$,
one sees that
$(x,r) \underset{l,k}{\sim}
\Phi_{A,\ell(\tau)}(x,r)$.
Since  
$\Phi_{A,\ell(\tau)}(x,r)= (x,r+\ell(\tau))$,
there exists a number $p \in \Zp$ such that 
\begin{equation*}
\ell(\tau) = 
(l(x) -r) 
+ 
\{l(\sigma_A(x)) - k(x)\} 
+ 
\cdots 
+
\{ l(\sigma_A^{p-1}(x)) - k(\sigma_A^{p-2}(x))\} 
+
\{r- k(\sigma_A^{p-1}(x))\} 
\end{equation*}
and 
$\sigma_A^p(x) = x$,
so that 
$\ell(\tau)=\sum_{i=0}^{p-1}c(\sigma_A^i(x))$.

Conversely, for a periodic orbit 
$\gamma =\{x,\sigma_A(x), \dots, \sigma_A^{p-1}(x)\} \in P_{orb}(X_A)$,
we have
$(x,r) \underset{l,k}{\sim}
\Phi_{A,\ell(\tau)}(x,r)$
for any $r \in \R$ with $b(x) \le r \le l(x)$.
\end{proof}
Let us denote by 
$\Per_n({X}_A)$
the set
$
\{ {x} \in {X}_A\mid {\sigma}_A^n({x}) = {x}\}
$
of $n$-periodic points.
For a H{\"{o}}lder continuous function
$f$ on $X_A$,
the dynamical zeta function
$\zeta_{A,f}(s)$
is defined by 
\begin{equation}
\zeta_{A,f}(s)
= \exp\{
\sum_{n=1}^\infty\frac{1}{n}\sum_{{x}\in \Per_n({X}_A)}
\exp( -s \sum_{k=0}^{n-1}{f}({\sigma}_A^k({x}))) \} \qquad
(\text{see } \cite{Ruelle1978},\cite{Ruelle2002}, \cite{PP}, \text{ etc}.)
\label{eq:fdynamiczeta}
\end{equation}
where the right hand side makes sense for a complex number
$s \in \C$ with 
${\Re}(s)>h$ for some positive constant $h>0$.
The function 
$\zeta_{A,f}(s)$
is called 
the dynamical zeta function on $X_A$
with potential function $f$. 
We may especially define the zeta function
$\zeta_{A,c}(s)$ 
for an integer valued continuous function
$c$ on $X_A$ whose class $[c]$ in $H^A$ is an order unit of 
$(H^A,H^A_+)$.
By a routine argument as in \cite[p.100]{PP}
with Lemma \ref{lem:period},
we have 
\begin{proposition}\label{prop:zetaformula} 
The zeta function 
\eqref{eqn:zetaflow}
of the flow of the one-sided suspension 
$(S_{A,b}^{l,k},\phi_{A})$
of $(X_A,\sigma_A)$
is given by the dynamical zeta function 
$\zeta_{A,c}(s)$ with potential function $c=l-k$ such as 
\begin{equation}
\zeta_{A,c}(s)
= \exp\{
\sum_{n=1}^\infty\frac{1}{n}\sum_{{x}\in \Per_n({X}_A)}
\exp( -s \sum_{k=0}^{n-1}{c}({\sigma}_A^k({x}))) \}. 
\label{eq:dynamiczeta}
\end{equation}
\end{proposition}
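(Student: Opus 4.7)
The plan is to take logarithms on both sides of \eqref{eqn:zetaflow} and \eqref{eq:dynamiczeta} and match the resulting series termwise using Lemma~\ref{lem:period}. On the flow side, expanding
\[
\log \zeta_{\phi_A}(s) = -\sum_{\tau \in P_{orb}(S_{A,b}^{l,k},\phi_A)} \log(1 - e^{-s\ell(\tau)})
= \sum_{\tau}\sum_{m=1}^\infty \frac{1}{m}\, e^{-s m \ell(\tau)}
\]
is valid provided $\Re(s)$ is large enough; this convergence follows from the growth of the number of periodic points of $(X_A,\sigma_A)$ together with the positivity provided by Lemma~\ref{lem:orderunitpos} (i.e.\ $c^n \to \infty$ uniformly), which gives an exponential bound on $\ell(\tau)$ and hence on the tail.

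Next, I would reorganize the right-hand side of \eqref{eq:dynamiczeta}. Every $x \in \Per_n(X_A)$ lies on a unique primitive periodic orbit $\gamma \in P_{orb}(X_A)$ of some primitive length $p$ dividing $n$; writing $n = mp$, the cocycle sum telescopes as
\[
\sum_{k=0}^{n-1} c(\sigma_A^k(x)) = m\,\beta_\gamma(c) = m\,\ell(\tau_\gamma),
\]
where $\tau_\gamma$ is the flow orbit associated with $\gamma$ by Lemma~\ref{lem:period}. The number of points of $\Per_n(X_A)$ lying on a fixed primitive orbit of length $p$ is exactly $p$ (one per point of $\gamma$), contributing $\frac{p}{n}e^{-sm\ell(\tau_\gamma)} = \frac{1}{m}e^{-sm\ell(\tau_\gamma)}$ to the $n$-th inner sum.

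Summing over $n$ with $p$ fixed and letting $m$ range over $\mathbb{N}$, the contribution of $\gamma$ to $\log \zeta_{A,c}(s)$ becomes $\sum_{m=1}^\infty \tfrac{1}{m} e^{-sm\ell(\tau_\gamma)}$, which matches term-by-term with the contribution of $\tau_\gamma$ on the flow side. Since Lemma~\ref{lem:period} provides a bijection between $P_{orb}(X_A)$ and $P_{orb}(S_{A,b}^{l,k},\phi_A)$ preserving these lengths, the two double series coincide, and exponentiating yields the claimed identity.

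The main obstacle is purely bookkeeping: one must correctly account for the factor $p$ arising from the $p$ distinct $n$-periodic points on a single primitive $p$-orbit and reconcile it with the $1/n$ normalisation in \eqref{eq:dynamiczeta}. The analytic issue of convergence is secondary since, as in \cite[p.~100]{PP}, the order-unit hypothesis on $[c]$ ensures a positive spectral gap after which both series converge absolutely and the rearrangement above is justified.
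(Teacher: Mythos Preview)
Your argument is correct and is precisely the ``routine argument as in \cite[p.~100]{PP}'' that the paper invokes together with Lemma~\ref{lem:period}: expand the Euler product into a double series over primitive orbits, regroup the sum in \eqref{eq:dynamiczeta} by primitive orbits using $n=mp$ and the count of $p$ points per orbit, and match via the bijection of Lemma~\ref{lem:period}. The paper gives no further details beyond this reference, so your write-up simply makes explicit what the paper leaves implicit.
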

\begin{remark}
The class 
$[c]$ 
of the function $c$ in $H^A$ is an order unit
of the ordered cohomology group
$(H^A,H^A_+)$,
so that there exists $N_c \in \N$ such that 
$N_c [c] -1 \in H^A_+$.
Hence $N_c -1 = f + g \circ \sigma_A -g$
for some $f \in C(X_A,\Zp)$ and $g \in C(X_A,\Z)$.
For a periodic point $x \in \Per_n(X_A)$ 
with 
$\sigma_A^n(x) = x$, 
we have
\begin{equation*}
\sum_{i=0}^{n-1}
\{ N_c c(\sigma_A^i(x)) -1(\sigma_A^i(x))\}
= 
\sum_{i=0}^{n-1}
f(\sigma_A^i(x)) 
\ge 0
\end{equation*}
and hence  
$
\sum_{i=0}^{n-1}c(\sigma_A^i(x)) \ge \frac{n}{N_c}.
$
The ordinary zeta function
$\zeta_A(t) $ of $(X_A,\sigma_A)$ 
is written as 
$\zeta_A(t) = \exp\{
\sum_{n=1}^\infty\frac{1}{n}
|\Per_n({X}_A)|t^n \} 
$
where
$|\Per_n({X}_A)|$
denotes the cardinarity of $n$-periodic points
$\Per_n({X}_A)$,
so that 
$\zeta_A(t) = \zeta_{A,1}(s)$
for $c=1$ 
where
$t = e^{-s}$.
As the function 
$\zeta_A(t)$
is analytic in $t\in \C$ with $|t|<\frac{1}{r_A}$,
where $r_A$ is the maximum eigenvalue of the matrix $A$,
we see that
so is $\zeta_{A,1}(s)$
in $s \in \C$ with $\Re(s) > \log r_A$.
Similarly, for the function $c = l-k$, 
we have
\begin{equation*}
|\sum_{{x}\in \Per_n({X}_A)}
\exp( -s \sum_{k=0}^{n-1}{c}({\sigma}_A^k({x})))| 
\le 
\sum_{{x}\in \Per_n({X}_A)}
\{\exp( - \Re(s))\}^{\frac{n}{N_c}} 
\end{equation*}
so that 
$\zeta_{A,c}(s)$
is analytic at least in $s \in \C$ with 
$\Re(s) > N_c \log r_A.$ 
We actually know that 
$\zeta_{A,c}(s)$
is analytic in the half plane 
$\Re(s) > h_{top}(S_{A,b}^{l,k},\phi_A)$
the topological entropy of the flow of the suspension
 $(S_{A,b}^{l,k},\phi_A)$
 as seen in \cite[Theorem 2.7]{Baladi1998}
(cf. \cite{Haydn}, \cite{Pollicott1986}, \cite{Ruelle1987}). 
\end{remark}

In the second half of this section,
we will study 
some relationship
between zeta functions of  one-sided flow equivalent
topological Markov shifts. 
Suppose that
$(X_A,\sigma_A)$ and $(X_B,\sigma_B)$
are continuously orbit equivalent
via a homeomorphism $h: X_A\rightarrow X_B$
with continuous functions 
$k_1,l_1: X_A\rightarrow\Zp$ 
and 
$k_2,l_2:X_B\rightarrow \Zp$
satisfying \eqref{eq:orbiteqx}
and \eqref{eq:orbiteqy}, respectively.
The  functions 
$c_1(x) = l_1(x) - k_1(x), x \in X_A$
and
$c_2(y) = l_2(y) - k_2(y), y \in X_B$
satisfy
$\Psi_h(1) =c_1$
and
$\Psi_{h^{-1}}(1) = c_2$.
In \cite{MMETDS}, it has been shown that 
the homeomorphism $h: X_A\rightarrow X_B$
induces a bijective correspondence
$\xi_h: P_{orb}({X}_A) \rightarrow P_{orb}({X}_B)$
between their periodic orbits
such that the functions
$c_1, c_2$ measure the difference 
of the length of periods between
$\gamma \in P_{orb}({X}_A)$
and 
$\xi_h(\gamma) \in P_{orb}({X}_B)$.
As a result,
the ordinary zeta functions
$\zeta_A(t), \zeta_B(t)$
heve been proved to be written in terms of  
dynamical zeta functions
 in the following way.
\begin{proposition}[\cite{MMETDS}] \label{prop:dzeta}
$\zeta_A(t) = \zeta_{B,c_2}(s)$
and
$\zeta_B(t) = \zeta_{A,c_1}(s)$
for $t=e^{-s}$.
\end{proposition}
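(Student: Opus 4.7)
The plan is to reduce both sides of each equality to Euler products over primitive periodic orbits and then use the orbit–correspondence machinery from \cite{MMETDS} to match the factors one-by-one. As noted in the remark, $\zeta_A(t)=\zeta_{A,1}(s)$ for $t=e^{-s}$, so the two claimed equalities are instances of a single statement: for any continuous $c\in C(X_A,\Z)$ (resp. $C(X_B,\Z)$) arising as $\Psi_{h^{-1}}(1)$ (resp. $\Psi_h(1)$), the dynamical zeta function on one side equals the ordinary zeta function on the other.

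First I would rewrite $\zeta_A(t)$ in Euler–product form. Using the grouping $|\Per_n(X_A)|=\sum_{p\mid n}p\,N_p$, where $N_p$ counts primitive $\sigma_A$-orbits of length $p$, and the expansion $-\log(1-t^p)=\sum_{k\geq 1}t^{pk}/k$, a routine manipulation gives
\begin{equation*}
\zeta_A(t)=\prod_{\gamma\in P_{orb}(X_A)}(1-t^{p(\gamma)})^{-1},
\end{equation*}
where $p(\gamma)$ denotes the primitive period of $\gamma$. The exact same manipulation, applied to the definition \eqref{eq:fdynamiczeta} of $\zeta_{B,c_2}(s)$ and using that $c_2^n(y)=\beta_{\gamma'}(c_2)\cdot(n/q(\gamma'))$ is constant along the $\sigma_B$-orbit through $y$ (for $y\in\gamma'$ with primitive period $q(\gamma')$), yields
\begin{equation*}
\zeta_{B,c_2}(s)=\prod_{\gamma'\in P_{orb}(X_B)}\bigl(1-e^{-s\beta_{\gamma'}(c_2)}\bigr)^{-1}.
\end{equation*}

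The second step is to invoke the main content of \cite{MMETDS}: the continuous orbit equivalence $h$ induces a bijection $\xi_h\colon P_{orb}(X_A)\to P_{orb}(X_B)$ such that for any $\gamma\in P_{orb}(X_A)$ with $x\in\gamma$ of primitive period $p(\gamma)$, the primitive length of $\xi_h(\gamma)$ is $c_1^{p(\gamma)}(x)=\beta_\gamma(c_1)$; symmetrically the primitive length of $\gamma=\xi_{h^{-1}}(\xi_h(\gamma))$ equals $\beta_{\xi_h(\gamma)}(c_2)$. Combining, one gets the key identity $p(\gamma)=\beta_{\xi_h(\gamma)}(c_2)$. Substituting $t=e^{-s}$ in the Euler product for $\zeta_A$ and reindexing via the bijection $\xi_h$, each factor $(1-e^{-sp(\gamma)})^{-1}$ becomes $(1-e^{-s\beta_{\xi_h(\gamma)}(c_2)})^{-1}$, which reassembles to $\zeta_{B,c_2}(s)$. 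Swapping the roles of $A$ and $B$ (using $\xi_{h^{-1}}=\xi_h^{-1}$) gives the companion identity $\zeta_B(t)=\zeta_{A,c_1}(s)$.

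The only nontrivial input is the periodic–orbit correspondence together with its primitive-length formula; that is already available from \cite{MMETDS}, and in the present setup its proof is essentially Lemma \ref{lem:period} transported across the equivalence (iterating $\sigma_B^{l_1(x)}\circ h=\sigma_B^{k_1(x)}\circ h\circ\sigma_A$ around a full $\sigma_A$-period shows that a primitive $\sigma_A$-period $p$ maps to a $\sigma_B$-return of length exactly $c_1^p(x)$, whose primitivity follows from the symmetric formula applied to $h^{-1}$). Once this is in hand, the remainder is the formal Euler-product bookkeeping sketched above.
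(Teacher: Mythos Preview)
Your proposal is correct and follows the same Euler-product/orbit-bijection route that the paper uses. Note that the paper does not give a separate proof of Proposition~\ref{prop:dzeta} (it is simply cited from \cite{MMETDS}), but the paper's proof of the generalization Theorem~\ref{thm:fezeta} proceeds exactly along your lines: rewrite both zeta functions as products over primitive periodic orbits, invoke the bijection $\xi_h:P_{orb}(X_A)\to P_{orb}(X_B)$ together with its length formula from \cite{MMETDS}, and match factors.
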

The above formulae imply the formulae
\begin{equation}
\zeta_{A,1}(s) = \zeta_{B,c_2}(s), \qquad
\zeta_{B,1}(s) = \zeta_{A,c_1}(s).
\end{equation}
Assume  that 
$(X_A, \sigma_A)$ and $(X_B,\sigma_B)$
are one-sided flow equivalent 
via a homeomorphism
$h:X_A\rightarrow X_B$.
They are continuously orbit equivalent,
so that 
the maps
$\Psi_h: C(X_B,\Z) \rightarrow C(X_A,\Z)$
and  
$\Psi_{h^{-1}}: C(X_A,\Z) \rightarrow C(X_B,\Z)$
are defined
by \eqref{eq:Psihfx}.
They are independent of the choice of the functions
$k_1,l_1: X_A\rightarrow\Zp$ 
and 
$k_2,l_2:X_B\rightarrow \Zp$
satisfying \eqref{eq:orbiteqx}
and \eqref{eq:orbiteqy}, respectively
(\cite[Lemma 4.2]{MMETDS}).
We provide a lemma.
\begin{lemma}\label{lem:4.3}
For $m \in \Zp$ and $f \in C(X_B,\Z)$, $g \in C(X_A,\Z)$, 
we have
\begin{enumerate}
\renewcommand{\theenumi}{\roman{enumi}}
\renewcommand{\labelenumi}{\textup{(\theenumi)}}
\item
$\Psi_h(f)^m(x) 
 = f^{l_1^m(x)}(h(x)) - f^{k_1^m(x)}(h(\sigma_A^m(x)))
 $
 for $x \in X_A$,
 so that 
 $$
g^m(x) 
 = \Psi_{h^{-1}}(g)^{l_1^m(x)}(h(x)) 
- \Psi_{h^{-1}}(g)^{k_1^m(x)}(h(\sigma_A^m(x))).
$$
\item
$
\Psi_{h^{-1}}(g)^m(y) 
 = g^{l_2^m(y)}({h^{-1}}(y)) - g^{k_2^m(y)}({h^{-1}}(\sigma_B^m(y)))
 $
 for $y \in X_B$,
 so that
 $$
f^m(y) 
 = \Psi_{h}(f)^{l_2^m(y)}({h^{-1}}(y)) 
- \Psi_{h}(f)^{k_2^m(y)}({h^{-1}}(\sigma_B^m(y))).
$$
\end{enumerate}
\end{lemma}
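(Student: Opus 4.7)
The plan is to prove (i) by induction on $m$; part (ii) follows by precisely the same argument after interchanging the roles of $(X_A, h, l_1, k_1)$ and $(X_B, h^{-1}, l_2, k_2)$. The two appended ``so that'' identities are obtained by substituting $f = \Psi_{h^{-1}}(g)$ into (i) (resp.\ $g = \Psi_h(f)$ into (ii)) and invoking the fact from \cite{MMETDS} that $\Psi_h \circ \Psi_{h^{-1}}$ and $\Psi_{h^{-1}} \circ \Psi_h$ represent the identity on the ordered cohomology groups, so that the cocycle sums over the finite orbit segment in question agree with those of $g$ and $f$.

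As a preparatory step I would first establish, by a short induction on $m$, the iterated orbit-equivalence relation
\[
\sigma_B^{l_1^m(x)}(h(x)) \;=\; \sigma_B^{k_1^m(x)}(h(\sigma_A^m(x))), \qquad x \in X_A,\ m \in \Zp.
\]
The case $m=1$ is \eqref{eq:orbiteqx}, and the inductive step follows by applying \eqref{eq:orbiteqx} at $\sigma_A^m(x)$, composing with $\sigma_B^{l_1^m(x)}$, and using the additivity $l_1^{m+1}(x) = l_1^m(x) + l_1(\sigma_A^m(x))$, $k_1^{m+1}(x) = k_1^m(x) + k_1(\sigma_A^m(x))$ together with the inductive hypothesis.

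For (i) itself, the base case $m=1$ is the defining formula \eqref{eq:Psihfx} for $\Psi_h(f)$. For the inductive step I would expand $\Psi_h(f)^{m+1}(x) = \Psi_h(f)^m(x) + \Psi_h(f)(\sigma_A^m(x))$ via the inductive hypothesis and the defining formula at $\sigma_A^m(x)$, while on the target side decomposing
\[
f^{l_1^{m+1}(x)}(h(x)) \;=\; f^{l_1^m(x)}(h(x)) + f^{l_1(\sigma_A^m(x))}\!\bigl(\sigma_B^{l_1^m(x)}(h(x))\bigr)
\]
via the Birkhoff cocycle identity $f^{a+b}(y) = f^a(y) + f^b(\sigma_B^a(y))$, and decomposing $f^{k_1^{m+1}(x)}(h(\sigma_A^{m+1}(x)))$ in the opposite order (splitting off the $k_1(\sigma_A^m(x))$ block first). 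After cancelling the common $f^{l_1^m(x)}(h(x))$ and $f^{k_1(\sigma_A^m(x))}(h(\sigma_A^{m+1}(x)))$ terms, what remains is the identity
\[
f^b(\sigma_B^a(y)) - f^b(y) \;=\; f^a(\sigma_B^b(y)) - f^a(y),
\]
with $y = h(\sigma_A^m(x))$, $a = k_1^m(x)$, $b = l_1(\sigma_A^m(x))$, where the iterated orbit-equivalence relation has been invoked to rewrite $\sigma_B^{l_1^m(x)}(h(x)) = \sigma_B^a(y)$ and $\sigma_B^{k_1(\sigma_A^m(x))}(h(\sigma_A^{m+1}(x))) = \sigma_B^b(y)$. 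This is a triviality, since both sides equal $f^{a+b}(y) - f^a(y) - f^b(y)$.

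The main technical obstacle is purely bookkeeping: at each stage four distinct exponents ($l_1^m(x)$, $k_1^m(x)$, $l_1(\sigma_A^m(x))$, $k_1(\sigma_A^m(x))$) and four distinct base points in $X_B$ are simultaneously in play, and one has to decompose the two cocycles $f^{l_1^{m+1}(x)}(h(x))$ and $f^{k_1^{m+1}(x)}(h(\sigma_A^{m+1}(x)))$ in \emph{opposite} orders so that the iterated orbit-equivalence identifies the shifted base points cleanly. Once the decomposition is set up correctly, the rest of the verification is purely formal.
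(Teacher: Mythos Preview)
Your inductive proof of the main identity in (i) is correct and is essentially the same telescoping argument the paper invokes via \cite[Lemma~4.3]{MMETDS}; you have just written it out in full rather than citing it.

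There is, however, a genuine gap in your handling of the appended ``so that'' identities. You substitute $f = \Psi_{h^{-1}}(g)$ into (i) and then appeal only to the fact that $\Psi_h \circ \Psi_{h^{-1}}$ \emph{represents the identity on the ordered cohomology group}. That cohomological statement gives only $\Psi_h(\Psi_{h^{-1}}(g)) = g + d\circ\sigma_A - d$ for some $d \in C(X_A,\Z)$, and hence
\[
\bigl(\Psi_h(\Psi_{h^{-1}}(g))\bigr)^m(x) \;=\; g^m(x) + d(\sigma_A^m(x)) - d(x),
\]
which is \emph{not} equal to $g^m(x)$ for general $x$ and $m$. The lemma asserts a pointwise identity, so the boundary terms cannot be ignored. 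What is actually needed is the stronger fact that $\Psi_{h^{-1}} = (\Psi_h)^{-1}$ as maps $C(X_A,\Z)\to C(X_B,\Z)$ at the level of functions, which is exactly what the paper cites from \cite[Proposition~4.5]{MMETDS}. Once you invoke that, the substitution $f = \Psi_{h^{-1}}(g)$ gives $\Psi_h(f) = g$ on the nose and the ``so that'' identity follows immediately.
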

\begin{proof}
(i)
As in \cite[Lemma 4.3]{MMETDS},
the identity
\begin{align*}
& \sum_{i=0}^{m-1} \{
\sum_{i'=0}^{l_1(\sigma_A^i(x))-1} f(\sigma_B^{i'}(h(\sigma_A^i(x))))
-
\sum_{j'=0}^{k_1(\sigma_A^i(x))-1} f(\sigma_B^{j'}(h(\sigma_A^{i+1}(x)))) \}\\
=
&
\sum_{i'=0}^{l_1^m (x)-1} f(\sigma_B^{i'}(h(x)))
-
\sum_{j'=0}^{k_1^m (x)-1} f(\sigma_B^{j'}(h(\sigma_A^{m}(x))))
\end{align*}
holds so that we see 
\begin{equation*}
\sum_{i=0}^{m-1}\Psi_h(f)(\sigma_A^i(x)) 
= f^{l_1^m(x)}(h(x)) - f^{k_1^m(x)}(h(\sigma_A^m(x))).
\end{equation*}
As $\Psi_{h^{-1}} = {(\Psi_h)}^{-1} $(\cite[Proposition 4.5]{MMETDS}), 
the desired identities hold.
(ii) is similarly shown.
\end{proof}

We generalize Proposition \ref{prop:dzeta}
such as the following theorem.
\begin{theorem} \label{thm:fezeta}
Suppose that
$(X_A, \sigma_A)$ and $(X_B,\sigma_B)$
are one-sided flow equivalent 
via a homeomorphism
$h:X_A\rightarrow X_B$.
Then  for 
$f \in C(X_B, \Z), 
g \in C(X_A, \Z)$
such that
the classes 
$[f], [g]$ are order units of the ordered cohomology groups
$(H^B,H^B_+),  (H^A,H^A_+)$,
respectively.
Then  
we have
\begin{equation*}
\zeta_{A,g}(s) = \zeta_{B,\Psi_{h^{-1}}(g)}(s),
\qquad
\zeta_{B,f}(s)=\zeta_{A,\Psi_h(f)}(s).
\end{equation*}
\end{theorem}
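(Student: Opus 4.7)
The plan is to rewrite each dynamical zeta function as an Euler product over primitive periodic orbits and then pair factors via the orbit bijection coming from the underlying continuous orbit equivalence. By the standard manipulation---grouping each $x\in\Per_n(X_A)$ according to its primitive orbit $\gamma$ of period $p\mid n$, noting that $g^n(x)=(n/p)\beta_\gamma(g)$ on such $x$, and summing the resulting geometric series in $j=n/p$---the definition \eqref{eq:fdynamiczeta} rearranges to
\[
\zeta_{A,g}(s)=\prod_{\gamma\in P_{orb}(X_A)}\bigl(1-\exp(-s\beta_\gamma(g))\bigr)^{-1},
\]
with an analogous formula for $\zeta_{B,\Psi_{h^{-1}}(g)}$, where $\beta_\gamma(g)=g^p(x)$ for any $x$ in $\gamma$. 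Lemma \ref{lem:orderunitpos} applied to $g$ (whose class is an order unit) forces $\beta_\gamma(g)>0$ for every $\gamma$, justifying both the Euler product and its convergence on a common right half-plane.

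Assuming one-sided flow equivalence, Theorem \ref{thm:fecoe} will give a continuous orbit equivalence, and as recalled just before Proposition \ref{prop:dzeta}, the homeomorphism $h$ induces a bijection $\xi_h\colon P_{orb}(X_A)\to P_{orb}(X_B)$ satisfying $c_1^p(x)=q$ whenever $\gamma\in P_{orb}(X_A)$ has primitive period $p$ with representative $x$ and $\xi_h(\gamma)$ has primitive period $q$ with representative $h(x)$. The heart of the proof will therefore be the identity
\[
\beta_\gamma(g)=\beta_{\xi_h(\gamma)}\bigl(\Psi_{h^{-1}}(g)\bigr),\qquad \gamma\in P_{orb}(X_A).
\]
Once this is granted, the two Euler products differ only in the parametrization of their factors, and $\zeta_{A,g}(s)=\zeta_{B,\Psi_{h^{-1}}(g)}(s)$ follows immediately; the companion identity $\zeta_{B,f}(s)=\zeta_{A,\Psi_h(f)}(s)$ will be obtained symmetrically from Lemma \ref{lem:4.3}(ii).

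To establish this identity, I would pick $x\in\gamma$ with $\sigma_A^p(x)=x$ and apply Lemma \ref{lem:4.3}(i) with $m=p$, obtaining
\[
g^p(x)=\Psi_{h^{-1}}(g)^{l_1^p(x)}(h(x))-\Psi_{h^{-1}}(g)^{k_1^p(x)}(h(x)).
\]
Writing $k_1^p(x)=aq+r$ with $0\le r<q$, the relation $l_1^p(x)-k_1^p(x)=c_1^p(x)=q$ gives $l_1^p(x)=(a+1)q+r$. Since $\sigma_B^q(h(x))=h(x)$, partial Birkhoff sums of $\Psi_{h^{-1}}(g)$ starting at $h(x)$ increase by exactly $\beta_{\xi_h(\gamma)}(\Psi_{h^{-1}}(g))$ per full turn of length $q$; subtracting the two shifted sums will collapse to exactly one turn, yielding $g^p(x)=\beta_{\xi_h(\gamma)}(\Psi_{h^{-1}}(g))$ as required.

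The main obstacle will be this last collapse---verifying that the residues $l_1^p(x)\bmod q$ and $k_1^p(x)\bmod q$ are equal (a consequence of $l_1^p(x)-k_1^p(x)=q$) so that the two partial Birkhoff sums along the periodic point $h(x)$ differ by precisely one full revolution around the orbit $\xi_h(\gamma)$. Everything else is formal: the Euler product unpacking is classical, and all inputs about orbit matching are already in hand through Lemma \ref{lem:4.3}, Proposition \ref{prop:dzeta}, and the bijection $\xi_h$, so no essentially new machinery is needed beyond careful bookkeeping.
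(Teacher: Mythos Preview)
Your overall strategy---convert to Euler products over primitive orbits, invoke the bijection $\xi_h$ from \cite{MMETDS}, and match factors via the identity $\beta_\gamma(g)=\beta_{\xi_h(\gamma)}(\Psi_{h^{-1}}(g))$ obtained from Lemma~\ref{lem:4.3}---is exactly the paper's approach.

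There is, however, a genuine gap in your final ``collapse'' step. You assert that $\sigma_B^q(h(x))=h(x)$ and then decompose $k_1^p(x)=aq+r$ to reduce the Birkhoff sums modulo full turns. But continuous orbit equivalence does not in general send periodic points to periodic points: from $\sigma_A^p(x)=x$ and the orbit relation one only deduces $\sigma_B^{k_1^p(x)}(h(x))=\sigma_B^{l_1^p(x)}(h(x))$, so $h(x)$ is merely \emph{eventually} periodic, with the periodic part beginning at $\sigma_B^{k_1^p(x)}(h(x))$. Your $aq+r$ bookkeeping therefore breaks down, since the first $k_1^p(x)$ iterates of $h(x)$ need not lie on the cycle $\xi_h(\gamma)$ at all.

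The paper circumvents this cleanly and you can too: simply telescope
\[
\Psi_{h^{-1}}(g)^{l_1^p(x)}(h(x))-\Psi_{h^{-1}}(g)^{k_1^p(x)}(h(x))
=\sum_{i=k_1^p(x)}^{l_1^p(x)-1}\Psi_{h^{-1}}(g)\bigl(\sigma_B^i(h(x))\bigr).
\]
By \cite[Lemma~6.5]{MMETDS} the set $\{\sigma_B^i(h(x)):k_1^p(x)\le i\le l_1^p(x)-1\}$ is precisely the orbit $\xi_h(\gamma)$, each point appearing once, so the right-hand side equals $\beta_{\xi_h(\gamma)}(\Psi_{h^{-1}}(g))$ with no periodicity of $h(x)$ required. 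Drop the $aq+r$ argument and your proof is complete.
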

\begin{proof}
We may assume that 
$(X_A,\sigma_A)$ and $(X_B,\sigma_B)$
are continuously orbit equivalent
via a homeomorphism $h: X_A\rightarrow X_B$.
For $ f \in C(X_B,\Z)$ such that 
the class $[f]$ 
is an order unit of $(H^A,H^A_+)$,
we will prove the equality
$
 \zeta_{B,f}(s)
 =\zeta_{A,\Psi_h(f)}(s).
$
A routine argument as in \cite[p. 100]{PP} shows 
\begin{equation*}
\zeta_{A,\Psi_h(f)}(s) 
= \prod_{\gamma \in P_{orb}({X}_A)}
(1 - t^{\beta_\gamma(\Psi_h(f))})^{-1} 
\quad
\text{ where } 
\quad t =e^{-s}
\end{equation*}
and
$\beta_\gamma(\Psi_h(f))
=\sum_{i=0}^{p-1}\Psi_h(f)(\sigma_A^i(x))
$
for a periodic orbit 
$\gamma = \{x,\sigma_A(x),\dots,\sigma_A(^{p-1}(x)\}\in P_{orb}({X}_A)$. 
We see the following formula by Lemma \ref{lem:4.3},
\begin{equation*}
\sum_{i=0}^{p-1}\Psi_h(f)(\sigma_A^i(x)) 
 = f^{l_1^p(x)}(h(x)) - f^{k_1^p(x)}(h(\sigma_A^p(x))),
\quad x \in X_A.
\end{equation*}
As 
$\gamma=\{ x,\sigma_A(x), \dots,\sigma_A^{p-1}(x)\}
 \in P_{orb}({X}_A)
$
and  $\sigma_A^p(x) = x$,
we have
\begin{equation*}
\sum_{i=0}^{p-1}\Psi_h(f)(\sigma_A^i(x)) 
 = f^{l_1^p(x)}(h(x)) - f^{k_1^p(x)}(h(x)).
\end{equation*}
Since we may identify 
the periodic orbits 
$P_{orb}({X}_A)$
of the one-sided topological Markov shift 
$(X_A,\sigma_A)$
with 
the periodic orbits 
$P_{orb}(\bar{X}_A)$
of the two-sided topological Markov shift 
$(\bar{X}_A, \bar{\sigma}_A)$,
an argument in \cite[Section 6]{MMETDS}
shows that there exists a bijective correspondence
$\xi_h: P_{orb}({X}_A)\rightarrow P_{orb}({X}_B)$.
By \cite[Lemma 6.5]{MMETDS},
we see
that $\xi_h(\gamma)$ has its period
$l_1^p(x)-k_1^p(x)$
and hence
$\xi_h(\gamma) 
= \{ \sigma_B^i(h(x)) \mid {k_1^p(x)} \le i \le {l_1^p(x)}-1 \}$
so that
$$
 \sum_{i=k_1^p(x)}^{l_1^p(x)-1}f(\sigma_B^i(h(x))) 
 = \beta_{\xi_h(\gamma)}(f).
$$
We then have 
\begin{align*}
\beta_\gamma(\Psi_h(f))
& = \sum_{i=0}^{p-1}\Psi_h(f)(\sigma_A^i(x)) 
 = \sum_{i=0}^{l_1^p(x)-1}f(\sigma_B^i(h(x))) -
    \sum_{i=0}^{k_1^p(x)-1}f(\sigma_B^i(h(x))) \\
& = \sum_{i=k_1^p(x)}^{l_1^p(x)-1}f(\sigma_B^i(h(x))) 
= \beta_{\xi_h(\gamma)}(f).
\end{align*}
Since
$\xi_h:P_{orb}({X}_A)\rightarrow P_{orb}({X}_B)
$
is bijective,
one sees that 
\begin{equation*}
\zeta_{A,\Psi_h(f)}(s) 
 = \prod_{\eta \in P_{orb}(X_B)}
(1 - t^{\beta_\eta(f)})^{-1}
= \zeta_{B,f}(s).
\end{equation*}
The other equality
$ 
\zeta_{B,\Psi_{h^{-1}}(g)}(s) = \zeta_{A,g}(s)
$
is similarly shown.
\end{proof}

\begin{corollary}
The set $Z(X_A,\sigma_A)$
of dynamical zeta functions of $(X_A,\sigma_A)$
whose potential functions are order units of the ordered cohomology
group $(H^A,H^A_+)$
is invariant under one-sided flow equivalence.
\end{corollary}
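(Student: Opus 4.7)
The plan is to deduce this corollary directly from Theorem \ref{thm:fezeta}, using the fact that the map $\Psi_h$ is an order isomorphism of the cohomology groups and hence sends order units to order units. Suppose $(X_A,\sigma_A)$ and $(X_B,\sigma_B)$ are one-sided flow equivalent via a homeomorphism $h:X_A\rightarrow X_B$. By Theorem \ref{thm:fecoe}, they are continuously orbit equivalent, so the homomorphisms $\Psi_h:C(X_B,\Z)\rightarrow C(X_A,\Z)$ and $\Psi_{h^{-1}}:C(X_A,\Z)\rightarrow C(X_B,\Z)$ are defined by \eqref{eq:Psihfx} and, by \cite[Theorem 5.11]{MMETDS} (as used in the proof of Proposition \ref{prop:3.2}), they descend to mutually inverse isomorphisms of the ordered cohomology groups $(H^B,H^B_+)$ and $(H^A,H^A_+)$.

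The first step will be to observe that any isomorphism of ordered abelian groups preserves order units: if $[f]\in H^B_+$ is an order unit of $(H^B,H^B_+)$ and $[g]\in H^A$ is arbitrary, then writing $[g] = \Psi_h([g'])$ for some $[g']\in H^B$ and choosing $n$ with $n[f]-[g']\in H^B_+$ gives $n[\Psi_h(f)]-[g]\in H^A_+$. Hence whenever $[f]$ is an order unit of $H^B$, the class $[\Psi_h(f)]$ is an order unit of $H^A$, and symmetrically for $\Psi_{h^{-1}}$.

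The second step will be to invoke Theorem \ref{thm:fezeta}. For any $f\in C(X_B,\Z)$ with $[f]$ an order unit of $(H^B,H^B_+)$, the function $\Psi_h(f)\in C(X_A,\Z)$ has $[\Psi_h(f)]$ an order unit of $(H^A,H^A_+)$ by the preceding step, and
\begin{equation*}
\zeta_{B,f}(s)=\zeta_{A,\Psi_h(f)}(s)\in Z(X_A,\sigma_A).
\end{equation*}
This yields the inclusion $Z(X_B,\sigma_B)\subseteq Z(X_A,\sigma_A)$. Applying the same reasoning with $h^{-1}$ in place of $h$ gives the reverse inclusion $Z(X_A,\sigma_A)\subseteq Z(X_B,\sigma_B)$, and hence $Z(X_A,\sigma_A)=Z(X_B,\sigma_B)$.

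No real obstacle is expected, as this is a formal consequence of Theorem \ref{thm:fezeta}. The one minor point requiring attention is the verification that the dynamical zeta function $\zeta_{A,f}(s)$ depends only on the class $[f]\in H^A$ (so that $Z(X_A,\sigma_A)$ is well-defined as a set of functions); this follows because for any coboundary $g\circ\sigma_A - g$ and any periodic point $x\in \Per_n(X_A)$, the telescoping sum $\sum_{i=0}^{n-1}(g\circ\sigma_A - g)(\sigma_A^i(x))$ vanishes.
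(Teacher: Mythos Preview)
Your proposal is correct and follows precisely the route the paper intends: the corollary is stated in the paper without proof, as an immediate consequence of Theorem~\ref{thm:fezeta} together with the fact (from \cite{MMETDS}) that $\Psi_h$ and $\Psi_{h^{-1}}$ induce mutually inverse order isomorphisms and hence carry order units to order units. Your write-up simply makes these implicit steps explicit, including the cohomological invariance of $\zeta_{A,f}$, and there is nothing to add.
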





\begin{thebibliography}{99}


\bibitem{Baladi1998}{\sc V. Baladi}, 
{\it Periodic orbits and dynamical spectra},
 Ergodic Theory Dynam. Systems {\bf 18}(1998), pp. 255--292.




















\bibitem{BH}
{\sc M. Boyle and D. Handelman},
{\it Orbit equivalence, flow equivalence and ordered cohomology},
Israel J.\  Math.\
{\bf 95}(1996), pp.\ 169--210.

\bibitem{BF} {\sc R. Bowen and J. Franks},
{\it Homology for zero-dimensional nonwandering sets},
Ann.\  Math.\ {\bf 106}(1977), pp.\ 73--92.




































\bibitem{CK}{\sc J. ~Cuntz and W. ~Krieger},
{\it A class of $C^*$-algebras and topological Markov chains},
 Invent.\ Math.\
 {\bf 56}(1980), pp.\ 251--268.





%

\bibitem{Franks}{\sc J. Franks},
{\it Flow equivalence of subshifts of finite type},
Ergodic Theory Dynam. Systems {\bf 4}(1984), pp.\ 53--66.










\bibitem{Haydn}{\sc N. T. Haydn},
{\it Meromorphic extension of the zeta function for Axiom A flows},
Ergodic Theory Dynam. Systems {\bf 10}(1990), pp. \ 347--360.


















\bibitem{LM}{\sc D. ~Lind and B. ~Marcus},
{\it An introduction to symbolic dynamics and coding},
 Cambridge University Press, Cambridge
(1995).












\bibitem{MaPacific}
{\sc K. Matsumoto},
{\it Orbit equivalence of topological Markov shifts and Cuntz--Krieger algebras},
Pacific J.\ Math.\ 
{\bf 246}(2010), 199--225.



















\bibitem{MMKyoto}
{\sc K. Matsumoto and H. Matui},
{\it Continuous orbit equivalence of topological Markov shifts 
and Cuntz--Krieger algebras},
Kyoto J. Math.
{\bf 54}(2014), pp.\ 863--878.


\bibitem{MMETDS}
{\sc K. Matsumoto and H. Matui},
{\it Continuous orbit equivalence of topological Markov shifts 
and dynamical zeta functions}, preprint, arXiv:1403.0719,
to appear in Ergodic Theory Dynam. Systems.





\bibitem{MatuiPre2012}
{\sc H. Matui}, 
{\it Topological full groups of one-sided shifts of finite type},
preprint, arXiv:1210.5800, to appear in J. Reine Angew. Math..

\bibitem{PP}{\sc W. Parry and M. Pollicott},
{\it Zeta functions and the periodic orbit structure of hyperbolic dynamics},
Ast{\'e}risque {\bf 187-188}(1990).





\bibitem{PS}
{\sc W. Parry and D. Sullivan},
{\it A topological invariant for flows on one-dimensional spaces},
Topology
 {\bf 14}(1975), pp.\ 297--299.

\bibitem{PT}
{\sc W. Parry and S. Tuncel},
{\it Classification problems in Ergodic Theory},
London Math. Soc. Lecture Note Series 
 {\bf 14}  Cambridge Univ. Press (1982).







\bibitem{Pollicott1986}{\sc M. Pollicott},
{\it Meromorphic extensions of generalized zeta functions},
Invent.\ Math.\ {\bf 85}(1990), pp.\ 147--164.


\bibitem{Po}
{\sc Y. T. Poon},
{\it A K-theoretic invariant for dynamical systems}, 
Trans.\  Amer.\ Math.\ Soc.\ 
{\bf 311}(1989), pp.\ 513--533.























\bibitem{Ro}
{\sc M. R{\o}rdam},
{\it Classification of Cuntz-Krieger algebras},
 K-theory {\bf 9}(1995), pp.\  31--58.










\bibitem{Ruelle1978}{\sc D. Ruelle},
{\it Thermodynamic formalism}, Addison-Wesley, Reading (Mass.) (1978).


\bibitem{Ruelle1987}{\sc D. Ruelle},
{\it Resonances for Axiom A flows}, 
J. Differential Geom.
{\bf 25}(1987), pp.\ 99--116.



\bibitem{Ruelle2002}{\sc D. Ruelle},
{\it Dynamical zeta functions and transfer operators},
Notice  Amer.\ Math.\ Soc. {\bf 49}(2002), pp.\ 175--193.






















\end{thebibliography}
\end{document}